\documentclass[11pt, reqno]{amsart}
\usepackage{amssymb,rotating,hyperref}%,refcheck}

\newtheorem{theorem}{Theorem}[section]
\newtheorem{lemma}[theorem]{Lemma}

\newtheorem{proposition}[theorem]{Proposition}

\newtheorem*{lconjecture}{Lang's Conjecture}

\theoremstyle{definition}

\newtheorem{definition}[theorem]{Definition}

\newtheorem{conjecture}[theorem]{Conjecture}
\newtheorem{example}[theorem]{Example}

\newtheorem{remark}[theorem]{Remark}

\def\lcm{\operatorname{lcm}}

\def\Z{\mathbb{Z}}
\def\Q{\mathbb{Q}}
\def\N{\mathbb{N}}

\def\beq {\begin{equation}}
\def\endq {\end{equation}}

\newcommand{\QQ}{\mathbb{Q}}

\newcommand{\ZZ}{\mathbb{Z}}
\newcommand{\h}{\hat{h}}
\newcommand{\rank}{\operatorname{rank}}

\newcommand{\Ocal}{\mathcal{O}}

\newcount\mazur
\mazur=163
\def\mazurbound{\number\mazur}
\newcount\qt
\qt=4160366
\def\qtbound{\number\qt}

\renewcommand{\epsilon}{\varepsilon}
\def\ge{\geqslant}
\def\geq{\geqslant}
\def\le{\leqslant}
\def\leq{\leqslant}

\begin{document}
\title{The uniform primality conjecture for elliptic curves}
\subjclass{11G05, 11A41} \keywords{Canonical height, divisibility
sequence, division polynomial, elliptic curve, isogeny, prime,
rational function field, reduction, Siegel's Theorem}
\thanks{The research of the second author was supported by grant
from NSERC of Canada and one from the LMS. The research of the third author was
supported by a grant from EPSRC}
\author{Graham Everest, Patrick Ingram, Val\' ery Mah\'e, Shaun Stevens.}
\address{(GE,VM+SS) School of Mathematics, University of East Anglia,
Norwich NR4 7TJ, UK}
\address{(PI) Department of Mathematics,
University of Toronto, Canada M5S 2E4}

\email{g.everest@uea.ac.uk}
\email{pingram@math.utoronto.ca}
\email{v.mahe@uea.ac.uk}
\email{shaun.stevens@uea.ac.uk}

\begin{abstract}
An elliptic divisibility sequence, generated by a point in the
image of a rational isogeny, is shown to possess a uniformly
bounded number of prime terms. This result applies over the
rational numbers, assuming Lang's conjecture, and over the
rational function field, unconditionally. In the latter case, a
uniform bound is obtained on the index of a prime term. Sharpened
versions of these techniques are shown to lead to explicit results
where all the irreducible terms can be computed.
\end{abstract}

\maketitle

%\tableofcontents
%\newpage

\section{Introduction}\label{intro}

The Mersenne problem asks if infinitely many integers of the form $2^n-1$ are prime.
More generally, if $a>b$ are positive coprime integers, one can ask if the
sequence
\begin{equation}\label{eq:mersenne}
V_n=\left(\frac{a^n-b^n}{a-b}\right)_{n\geq 1}
\end{equation}
has
infinitely many prime terms.  The answer is negative if $a/b$ is a
perfect power. For example, if $a=A^2$ and $b=B^2$, with $A$ and
$B\in\ZZ$, then the terms factorize as
$$V_n=\left(\frac{A^n-B^n}{A-B}\right)\left(\frac{A^n+B^n}{A+B}\right)$$
for $n$ odd and
$$V_n=\left(\frac{A^n-B^n}{A^2-B^2}\right)\left(A^n+B^n\right)$$
for $n$ even. It is easy to see that neither factor may be a unit
when $n$ is large enough.  Indeed, since the greatest common
divisor of the two terms on the right (in either expression) is
easily controlled, there are few possibilities for $V_n$ to be a
prime power, a remark germane to this paper.

Let $E/\QQ$ denote an elliptic curve, given by a Weierstrass equation with
integer coefficients, and let $P\in E(\QQ)$ denote a non-torsion point.  One can always write
$$nP=\left(\frac{A_n}{B_n^2}, \frac{C_n}{B_n^3}\right),$$
with $A_n, B_n$, and $C_n\in\ZZ$, and $\gcd(A_n, B_n)=1$.  The
sequence $(B_n)_{n\geq 1}$ is an \emph{elliptic divisibility
sequence}. These sequences have been the focus of attention of a
number of authors: consult
\cite{cz,eds,pe,primeds,emw,pi_eds,bjorn,shipsey-thesis,
sileds1,sileds2,sileds3,streng,mow}. The role of an elliptic
divisibility sequence relative to the addition law of $E(\QQ)$ is
analogous to the role of the sequence \eqref{eq:mersenne} relative
to the multiplicative group of $\QQ$. It is unlikely that $B_n$
will ever be prime, because of the relation $B_1\mid B_n$. It is
natural, as in the multiplicative case, to ask when $B_n/B_1$
might be prime, or a prime power.

\subsection{Background}% and Computational Data}

Chudnovsky and Chudnovsky \cite{chuds} considered the question of
prime occurrence in elliptic divisibility sequences
computationally, finding some examples of prime values which are
large (several hundred decimal digits) in comparison with the
starting data. These computations were not easy on several counts.
For one thing, the terms grow very rapidly - the logarithm is
quadratic in the index, and proportional to the global canonical
height. Also, there was a shortage of known small height points.
Finally, the computing power at the time was much less than what
is available today. With the examples to hand, searching was
feasible only out to index around $n=100$. Much larger prime terms
in elliptic divisibility sequences have been found recently.

\begin{example}\label{brid}%(Br\'id N\'i Fhlathu\'in 1999)
Consider $$E : y^2+y=x^3-x \mbox{ with } P=[0,0].$$
The following table shows indices $n$ for which $B_n$ is prime (actually, a Miller-Rabin
pseudo-prime to
10 randomly chosen bases), together with the number $\# B_n$ of decimal digits of $B_n$:

\medskip
\begin{tabular}{|c||c|c|c|c|c|c|c|c|c|c|c|c|c|c|}
\hline $n$ &5 &  7 & 8 & 9 & 11 & 12 & 13 &  19 & 23 &  29 &  83 &
101 & 409 & 1291 \\ \hline $\# B_n$ &1
&1&1&1&2&2&2&4&6&10&77&114&1857&18498\\\hline
\end{tabular}
\medskip

\noindent The final two primes were found by Br\'id N\'i Fhlathu\'in (1999) and the
first author (2006) respectively using
MAGMA \cite{magma} and PARI-GP \cite{parigp}.
The largest takes a couple of hours to test for (pseudo-) primality.
\end{example}

Bearing in mind that all of these examples have appeared at the
outer limits of what is computationally feasible, at the time,
this might suggest that some elliptic divisibility sequences
contain infinitely many prime terms. On the other hand, in
\cite{eds}, all of the examples from \cite{chuds} were tested out
to $n=500$ but no more prime terms appeared. Furthermore, while it
is true that the primes in Example~\ref{brid} are large, this
sequence has, up to the present, produced only 14 prime terms. It
also manifests a pattern witnessed in several examples (see the
table in section~\ref{comps}), namely, that a \emph{gap principle}
seems to be at work. What this means is that the gaps between
prime terms grow quickly in proportion to the index. Taking this
together with the quadratic exponential growth rate of the
sequence forces any outlying prime term to be inordinately large.
In section~\ref{comps}, the results of extensive computations are
exhibited, showing the appearance of prime terms in elliptic
divisibility sequences generated by the 18 rational points with
smallest known global canonical height in Elkies'
table~\cite{nde}. These, together with all the computations
performed for this paper, used both \cite{magma} and
\cite{parigp}. Together with a heuristic argument from~\cite{eds}
(revisited in section~\ref{HA}), this indicates that the case of
elliptic divisibility sequences could be quite different from that
of the multiplicative sequences \eqref{eq:mersenne}, and lends
support to what we believe could be true, namely:

\begin{conjecture}\label{Qupc}If $B=(B_n)$ denotes an
elliptic divisibility sequence generated by a rational point on an
elliptic curve in minimal form, then the number of prime terms
$B_n/B_1$ is uniformly bounded, independent of curve and point.
\end{conjecture}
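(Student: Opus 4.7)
The broad plan is to show that, for $n$ beyond some absolute threshold depending neither on the curve nor the point, every ``extra'' term $B_n/B_1$ is forced to contain at least two distinct large prime divisors, hence fails to be prime. I would attack this by splitting on whether the generating point $P$ lies in the image of a nontrivial rational isogeny.

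\emph{Isogeny case.} If $P = \phi(Q)$ for a rational isogeny $\phi \colon E' \to E$ of degree $d \geq 2$, I would exploit a Mersenne-type factorisation. Precisely as in the split $V_n = \tfrac{A^n-B^n}{A-B}\cdot\tfrac{A^n+B^n}{A+B}$ recalled in the introduction, the relation $nP = \phi(nQ)$ and the structure of the dual isogeny should produce $B_n = U_n V_n$, where $U_n$ is (up to common factors controlled by the kernel of $\phi$) essentially the denominator attached to $nQ$ and $V_n$ is a ``new'' piece. Bounding $\gcd(U_n,V_n)$ in terms of the conductor of $\ker\phi$ would force both factors to exceed $1$ for every $n$ larger than a function of $d$ alone. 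Invoking Mazur's isogeny theorem to bound $d$, one obtains a uniform bound on the number of $n$ with $B_n/B_1$ prime. This is the part I expect to go through unconditionally, and it is what the abstract advertises.

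\emph{Non-isogeny case.} When $P$ lies in the image of no nontrivial rational isogeny, I would combine an effective elliptic primitive-divisor theorem (in the Silverman--Ingram vein) with the canonical height expansion $\log B_n = \tfrac12 n^2 \hat h(P) + O(n)$. The primitive-divisor theorem provides, for $n > n_0(E,P)$, a prime $p \mid B_n$ not dividing any earlier term. If $B_n/B_1 = p$, then all local height contributions must conspire to make $\log p$ equal the full height of $nP$; this equality cannot persist as $n\to\infty$ because the archimedean local heights oscillate only linearly in $n$, while the main term grows quadratically. To collapse $n_0(E,P)$ to an absolute constant I would invoke Lang's conjecture $\hat h(P) \geq c \log|\Delta_E|$, which swallows the dependence of the error terms on the curve.

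\emph{Main obstacle.} The difficulty, and the reason the statement is posed as a conjecture, is precisely this last uniformisation. The isogeny step yields a uniform bound once Mazur's theorem is applied, and over function fields one can dispense with Lang entirely because canonical heights are controlled by the geometry of the curve. Over $\QQ$, however, for a general $P$ I see no way to avoid both a strong quantitative form of Lang's conjecture and an effective, polynomially uniform Siegel theorem for the $S$-integral points governing the archimedean oscillation. Absent those inputs, the only viable route is the compromise the paper takes: establish the bound unconditionally in the isogeny case (and over function fields), and only conditionally on Lang in the general rational case.
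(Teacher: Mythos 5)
The statement you are attempting to prove is labelled a \emph{conjecture} in the paper; the authors do not prove it, and your sketch does not either. What is actually proved (Theorems~\ref{mainQ} and~\ref{mainQt}) concerns only \emph{magnified} points, and over $\QQ$ even that special case is conditional on Lang's conjecture. So there is no ``paper's own proof'' to compare against, but your sketch contains two concrete errors worth naming.

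Your claim that the isogeny case goes through unconditionally over $\QQ$ is false, and contrary to what you assert, that is not what the abstract advertises. Producing a factorisation $B_n = U_n V_n$ with controlled overlap is the easy step (Lemmas~\ref{easy-factorization} and~\ref{easy}); the paper bounds the excess $v(B_{\sigma(P)}) - v(B_P)$ by $v(\deg\sigma)$ via formal groups, not by the conductor of $\ker\phi$. The hard step is showing that \emph{each} factor is nontrivial for $n$ beyond a uniform threshold, which is a quasi-integrality (Siegel) estimate. Silverman's quantitative Siegel theorem, which supplies the uniformity, requires a lower bound on the Lang ratio $\h(P')/\max\{1,h(E')\}$ --- i.e., Lang's conjecture. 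Bounding $\deg\sigma$ via Mazur is orthogonal to that difficulty. Over $\QQ(t)$ one escapes only because Hindry--Silverman have proved Lang there; over $\QQ$ the magnified case remains conditional. Your non-isogeny sketch fares no better: a primitive divisor of $B_n$ does not preclude $B_n/B_1$ from \emph{being} that prime, and your quadratic-versus-linear growth argument is precisely the heuristic of Section~\ref{HA}, which the paper explains would require a David-type lower bound $\log B_n > \h(P)n^2 - C\log n(\log\log n)^4$ with $C$ \emph{linear} in $h(E)$, on top of Lang. Neither input is available, and the paper leaves the non-magnified case entirely open, even conditionally.
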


In \cite{primeds} it was
demonstrated that if $P\in E(\QQ)$ lies in the image of a non-trivial
isogeny %$\phi:E'\rightarrow E$,
then $B_n$ can be prime (indeed a prime power) for only finitely many~$n$.
When the isogeny is the endomorphism~$[k]$ (multiplication by the integer~$k$), this
is in direct analogy with the trivial
case for sequences of the form \eqref{eq:mersenne}, discussed earlier.
The proof follows by giving a preliminary
argument showing the existence of a canonical factorization, then
estimates arising from a strong form of Siegel's Theorem
show that each factor is
too large to be a unit.

In the current article, a proof of Conjecture~\ref{Qupc} will be provided,
assuming Lang's Conjecture, in the case where
$P$ lies in the image of a non-trivial isogeny. The methods include the use of a
quantitative version of Siegel's Theorem proved by Silverman.
As the question entails the application of results from diophantine approximation, it is also
interesting to
consider the problem over the function field $\QQ(T)$, where stronger diophantine
approximation results are known (for example, Lang's Conjecture). Stronger conclusions do indeed become possible.

\subsection{Main Results}\label{results}
Let $K$ be $\QQ$ or $\QQ(t)$, and let $\mathcal O_K$ be $\ZZ$ or $\QQ[t]$ respectively.
Let $E$ be an elliptic curve over $K$ and $P\in E(K)$; then we can write
$$
x(P)=\frac{A_P}{B_P^2},
$$
with $A_P,B_P\in\mathcal O_K$ coprime and $B_P>0$ in the case $K=\QQ$, or $B_P$ monic in the case $K=\QQ(t)$.

Much of the discourse assumes a conjecture of Lang, which arises
naturally, and appears to be a necessary assumption in any attempt
to solve Conjecture~\ref{Qupc}. This conjecture will now be stated
and relies upon definitions provided in full in
section~\ref{notation}. It is known that the canonical height
$\h(P)$ is zero if and only if $P$ is a point of finite order, so
it is natural to ask how small $\h(P)$ might be for non-torsion
points $P$.  This question turns out to be quite important:  in
general, quantitative estimates for diophantine approximation on
elliptic curves all rely on some sort of lower bound on
$\h(P)/h(E)$, where $h(E)$ is the height of the curve. Considering
elliptic curves in general, it is not hard to show that
$\h(P)/h(E)$ can be made arbitrarily small (without resorting to
choosing torsion points $P$). For minimal curves $E/K$, however,
this seems not to be the case.

\begin{lconjecture}
There exists $\delta>0$, which depends only on $K$, such that $\h(P)\geq \delta\max\{1,h(E)\}$,
for all minimal curves $E/K$ and all non-torsion points $P\in E(K)$.
\end{lconjecture}

Our main results use a definition which appeared first in \cite{primeds} and in
a more general form in \cite{pe}. In \cite{primeds}, some crude counting
amongst small conductor curves suggested that when $K=\mathbb Q$, the definition
applies in a very
large number of non-trivial cases.

\begin{definition} If $P\in E(K)$ is non-torsion and is the image of a $K$-rational point
under a non-trivial $K$-rational isogeny, then say that $P$ is
\emph{magnified}.
\end{definition}

By composing the pre-image of the isogeny with an
isomorphism if necessary, it
may always be arranged that the pre-image is in minimal form. This will be a standing
assumption
throughout the paper.

In the rational case, the main theorem of the paper follows (see
Theorem~\ref{th:main} for a more precise version):

\begin{theorem}\label{mainQ}Assume $K=\QQ$ and $B$ is an elliptic
divisibility sequence generated by a magnified $K$-rational point
on an elliptic curve in minimal form. If Lang's conjecture holds
then the number of prime power terms $B_{n}/B_1$ is bounded
independently of the curve, the point and the degree of the
isogeny.
\end{theorem}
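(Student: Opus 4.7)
The plan is to exploit the magnification hypothesis to decompose each term $B_n = B_{nP}$ into two factors that both grow like a positive fraction of $n^2\h(P)$, and then to use a quantitative form of Siegel's theorem (together with Lang's conjecture) to show that neither factor can be nearly a unit except for a uniformly bounded set of indices~$n$. Since $P$ is magnified, write $P=\sigma(Q)$ for a non-trivial $K$-rational isogeny $\sigma:E'\to E$ of degree $d\geq 2$, with $Q\in E'(K)$ and $E'$ in minimal form by the standing assumption of section~\ref{results}. The argument will be insensitive to~$d$; only the fact that $d\geq 2$ is needed.

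Following the strategy introduced in \cite{primeds}, the first step is the construction of a canonical factorization $B_{nP}=U_n\,V_n$ (up to bounded contamination at primes of bad reduction and at primes dividing~$d$), where $U_n$ is the image of $B_{nQ}$ under pushforward by~$\sigma$ and $V_n$ records the local contributions at primes where $nQ$ reduces into the non-trivial part of $\ker(\sigma)$. Since $\h(P)=d\,\h(Q)$, the usual comparison between local heights and $\log B_{\bullet}$ yields
\begin{equation*}
\log U_n = n^2\h(Q) + O(h(E)), \qquad \log V_n = n^2\bigl(\h(P)-\h(Q)\bigr) + O(h(E)),
\end{equation*}
so $\log U_n \asymp \tfrac{1}{d} n^2\h(P)$ and $\log V_n \geq \tfrac{1}{2} n^2\h(P) + O(h(E))$.

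Next, if $B_{nP}/B_P$ is a prime power $\ell^k$, then, after absorbing the bounded contamination, both $U_n$ and $V_n$ must themselves be powers of the same prime $\ell$; this forces one of them to be essentially cancelled by $\gcd(U_n,V_n)$. At this stage Silverman's quantitative version of Siegel's theorem is invoked: for any fixed $\epsilon>0$ (eventually chosen smaller than $1/(2d)$), it supplies an upper bound of the form
\begin{equation*}
\log\gcd(U_n,V_n) \;\leq\; \epsilon \, n^2 \h(P) + C_1(E),
\end{equation*}
with $C_1(E)$ controlled by $h(E)$. Comparing this with the lower bound for $\log U_n$ collapses to $n^2\h(P)\leq C_2\,h(E)$ with $C_2$ absolute; applying Lang's conjecture $\h(P)\geq \delta\max\{1,h(E)\}$ reduces the right-hand side to an absolute constant, so the set of admissible~$n$ has cardinality bounded independently of~$E$, $P$ and~$d$.

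The principal obstacle is making the Siegel-type gcd bound in the penultimate step genuinely uniform: the dependence on the curve $E$ and on the isogeny $\sigma$ must be packaged so that the degree $d$ disappears from the final estimate, which is delicate because a priori both factor sizes and the common support scale with~$d$. This is precisely the feature of Silverman's explicit refinement of Siegel's theorem that makes the argument work, and it is the reason that Lang's conjecture (rather than merely a positive lower bound on $\h(P)$) is needed to eliminate the residual $h(E)$ term and deliver a truly universal bound.
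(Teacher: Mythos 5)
Your high-level strategy — factorize $B_{nP}$ via the isogeny, show each factor grows, then apply a quantitative Siegel plus Lang — matches the paper's use of Lemma~\ref{easy-factorization}, Lemma~\ref{quasiint}, and Silverman's quantitative Siegel theorem. But there is a genuine gap in the crucial uniformity claim.

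You assert that ``the argument will be insensitive to $d$; only the fact that $d\geq 2$ is needed,'' and near the end you propose choosing $\epsilon < 1/(2d)$ in Silverman's theorem. This does not give a bound independent of the isogeny degree: the constant $M_{\epsilon,C}$ in Silverman's quantitative Siegel theorem blows up as $\epsilon\to 0$, so sending $\epsilon$ below $1/(2d)$ reintroduces the very $d$-dependence you are trying to remove. More concretely, your own estimate $\log U_n \asymp \frac{1}{d}n^2\h(P)$ shows that the factor coming from $B_{nQ}$ becomes a vanishingly small proportion of $B_{nP}$ as $d$ grows, and there is no reason that Siegel-type bounds with error $O(h(E))$ can certify $U_n/U_1>1$ in a $d$-uniform way. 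The paper avoids this trap by splitting into two cases: when $\sigma$ is cyclic, Mazur's theorem bounds $d\le 163$, so the $h(m)$ contributions in Lemma~\ref{quasiint} are absorbed into absolute constants and the Siegel argument proceeds; when $P\in kE(\QQ)$ (so $d=k^2$ can be arbitrary), the quasi-integrality route is abandoned entirely in favor of a primitive-divisor argument — if $B_{nP}/B_P$ is a prime power then $n\mid k$ or $n$ lies in a uniformly bounded Zsigmondy set for the sequence generated by $P'$, by the Ingram--Silverman uniform primitive divisor theorem \cite{is}. Your proposal has no mechanism for the second case and no substitute for Mazur's bound in the first.

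A smaller but real issue: you describe Silverman's theorem as supplying an inequality $\log\gcd(U_n,V_n)\le \epsilon n^2\h(P)+C_1(E)$. That is not its content; it bounds the \emph{number} of $(\epsilon,C)$-quasi-integral points in a subgroup of bounded rank given a lower bound on $\rho(P,E)$. The correct reduction (Lemma~\ref{quasiint} in the paper) shows that if $B_{nP}/B_P$ is a prime power then $nP$ or $nP'$ is $(\epsilon,C)$-quasi-integral for explicit $\epsilon<1$ and $C=O(h(E)+h(m)+h(n))$; one then counts such $n$. Repackaging this as a gcd bound conflates the conclusion with the hypothesis.
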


When $K=\QQ(t)$, as anticipated, an unconditional version of
Theorem~\ref{mainQ} holds. A bigger surprise is the different
character of the main conclusion.

\begin{theorem}\label{mainQt}Assume $K=\QQ(t)$ and $B$ is an
elliptic divisibility sequence arising from
a magnified $K$-rational point on an elliptic curve in minimal form.
Then $B_{n}/B_1$ fails to be a prime power for all indices $n\ge\qtbound$.
\end{theorem}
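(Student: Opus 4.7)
The plan is to promote the qualitative argument of \cite{primeds} to the effective setting of $K=\QQ(t)$, where all diophantine inputs are available unconditionally and with explicit constants, and then to track those constants to the threshold $n\ge\qtbound$. From the hypothesis $P=\sigma(Q)$ with $\sigma\colon E'\to E$ a $K$-rational isogeny of degree $d\ge 2$ and $E'$ in minimal form, one first derives a canonical factorization
$$B_n\;=\;\frac{U_n\,V_n}{W_n}\qquad\text{in }\QQ[t],$$
where $U_n$ is the denominator of $nQ\in E'(K)$, $V_n$ is the \emph{isogeny part} built from the division polynomial of the dual isogeny evaluated at $nQ$, and $W_n$ is a common factor whose degree is bounded by an explicit multiple of $h(E')$ that does not depend on $n$. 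The target is to show that, after dividing out $W_n$, the polynomials $U_n$ and $V_n$ must carry distinct irreducible divisors once $n\ge\qtbound$, so $B_n/B_1$ cannot be a prime power.

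The quantitative ingredients needed are: (i) a Silverman-type degree formula $\deg B_n=n^{2}\h(P)+O(h(E))$ over $\QQ(t)$ with explicit error; (ii) the isogeny identity $\h(P)=d\,\h(Q)$; (iii) Lang's conjecture over $\QQ(t)$, which in this setting is a theorem and supplies an explicit $\delta>0$ with $\h(Q)\ge\delta\max\{1,h(E')\}$; and (iv) a quantitative Siegel-type statement over function fields, deducible from Mason's $abc$-theorem together with formal-group estimates, bounding the total degree of repeated prime factors in $B_n$ and in each of $U_n$ and $V_n$. Combining (i)--(iii) yields explicit quadratic lower bounds for $\deg U_n$ and $\deg V_n$ (growing like $n^{2}\delta h(E')$ and $n^{2}(d-1)\delta h(E')$ respectively), while (iv) rules out the possibility that either $U_n$ or $V_n$ is concentrated on a single irreducible factor once $n$ is large in terms of the constants from (i)--(iii). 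Assuming for contradiction that $B_n/B_1=\pi^{k}$, the whole of $U_n$ and $V_n$ would be powers of the same irreducible $\pi\in\QQ[t]$, and a primitive-divisor analysis based on (iv) then produces an irreducible factor of $U_n$ that cannot divide $V_n$ (or vice versa), a contradiction.

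The dominant obstacle is not structural but arithmetical: the numerical bound $\qtbound$ arises from simultaneously optimizing the function-field Lang constant, the Siegel/Mason constant, the explicit Silverman gap between canonical and naive heights on $E$ and $E'$, the bound on $\deg W_n$ in terms of $h(E')$, and the dependence on the isogeny degree $d$. Small values of $d$, notably $d=2$ and $d=3$, tend to dominate the final threshold and so require separate and more delicate analysis. Making every constant explicit, optimizing over $d$ and the various free parameters of the Siegel/abc input, and collapsing the result to a single absolute constant is the main technical labour; this bookkeeping is precisely what forces the specific numerical value $\qtbound$ rather than a qualitative ``$n$ sufficiently large''.
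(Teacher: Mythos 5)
Your general strategy --- factor through the isogeny, apply effective Lang over $\QQ(t)$, and compare degrees --- is in the right spirit, but there are two substantive gaps and a few details that run against what the paper actually does.

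\textbf{The missing case distinction.} Your argument treats $d=\deg\sigma$ as a bounded parameter, but the degree of a $K$-rational isogeny is not uniformly bounded: $[k]:E\to E$ is $K$-rational for every integer $k$. The paper's proof is therefore split into two cases. When the isogeny is $[k]$ (equivalently $P\in kE(K)$), a primitive-divisor argument (Theorem~\ref{upd}, the $\QQ(t)$ analogue of the Zsigmondy/Silverman theorem) is used and there is \emph{no} dependence on $k$. When the isogeny is cyclic, the degree $m$ is bounded by $163$ using Mazur's theorem via specialization to $\QQ$, and a direct degree-count (Theorem~\ref{upc}) finishes. Without this dichotomy your threshold cannot be made absolute: your explicit bounds depend on $d$, and $d$ is unbounded. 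This is the structural idea your proposal is missing.

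\textbf{The cyclic case is simpler than you suggest.} Once both $E$ and $E'$ are minimal, Lemma~\ref{easy-factorization} (via formal groups) gives $v(B_{P'})\le v(B_P)\le v(B_{P'})+v(m)$; since $m$ is a rational integer, $v(m)=0$ at every finite place of $\QQ(t)$, so the factorization is clean: $B_P=B_{P'}\widetilde B_P$ with the two factors \emph{coprime}, and no correction factor $W_n$. From $\frac{B_n}{B_1}\cdot B_1=B_n'\widetilde B_n$ one then only needs the inequality $\deg(B_n/B_1)>\deg(B_n')>\deg(B_1)$ to force two distinct irreducible divisors of $B_n/B_1$; this is a pure degree count using Lemma~\ref{compareheight}, Proposition~\ref{height-canonical_height-function-field-case}, and the effective Lang bound, not a primitive-divisor/Mason-$abc$ argument. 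Your plan to extract a primitive divisor of $U_n$ not dividing $V_n$ is unnecessary here (it is the mechanism used in the \emph{other} case), and your intermediate claim that $U_n$ and $V_n$ would \emph{both} be powers of $\pi$ does not follow: only the quotients $U_n/U_1$ etc.\ are constrained.

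\textbf{The numerology runs the other way.} For cyclic isogenies, the threshold in the paper has the shape $n\ge N_1\sqrt{m}$, so it is the \emph{largest} admissible degree $m=163$ (not $d=2,3$) that forces $n\ge\qtbound$; the height identity $h=mh'$ pushes $h'$ down as $m$ grows, so larger $m$ requires larger $n$ before $\deg(B_n')$ exceeds $\deg(B_1)$. Your remark that small $d$ dominate the final threshold has this dependence backwards.
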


The different nature of the two results is worthy of comment and
sections \ref{FFcase} and \ref{Ratcase} are given over to a full
discussion of the different cases. At this point, the following
question seems worth asking, it is one that we have not been able
to resolve.

\noindent\textbf{Question} In the case when $K=\QQ$, assume that
$B$ is an elliptic divisibility sequence generated by a magnified
$K$-rational point on an elliptic curve in minimal form. If Lang's
conjecture is true, does it follow that $B_n/B_1$ fails to be a
prime power for all~$n$ beyond some uniform bound?

A positive answer to this question is certainly desirable, because
it means that a large class of explicit examples can, at least
hypothetically, be tested to find all the prime terms. With
current techniques, explicit examples do exist~\cite{emr} (see
also Example~\ref{cncs}), but for a limited class of curves.
Theoretically too, a positive answer is satisfying because it
means that the two cases when $K=\QQ$ and $K=\QQ(t)$ are exactly
parallel, assuming the generating point $P$ is magnified. Note
however that, over $K=\QQ(t)$, the primality conjecture itself is
not likely to be true - a discussion, which includes some striking
examples, follows in section~\ref{FFcase}.

\medskip

In the next section, a proof will be given that, under Lang's conjecture, a uniform bound exists
for the number of prime terms~$B_n/B_1$ which applies in both the rational
and the function field cases, assuming the elliptic divisibility sequence
is generated by a magnified point on a minimal curve. This section highlights
the convergence of the two theories. In sections~\ref{FFcase} and~\ref{Ratcase},
which follow on, various
points of divergence will be discussed, often with reference to explicit examples.

\section{The Uniform Primality Conjecture in Both Cases}\label{both}
\subsection{Notation}\label{notation}
The following notation will be standard throughout:
\medskip

$\begin{array}{cl}
K & \textrm{ either $\Q$ or $\Q (t)$;}\\[3pt]
\mathcal{O}_{K} & \textrm{ either $\Z$ if $K=\Q$ or $\Q[t]$ if
$K=\Q(t)$;}\\[3pt]
v &\textrm{ a prime of $K$ with completion $K_v$, ring of integers $\Ocal_v$;}\\[3pt]
\log |.|_{\infty}& \textrm{ either $\log (|.|)$ if $K=\Q$ or $\deg$
if $K=\Q (t)$;}\\[3pt]
h(\frac{p}{q}) & \textrm{ the height of $\frac{p}{q}$
defined, for $p,q\in\mathcal{O}_{K}$ coprime,
by }\\[3pt]
& \textrm{ }\qquad\quad h(\frac{p}{q}) :=\max (\log |p|_{\infty}, \log
|q|_{\infty});\\[3pt]
E & \textrm{ an elliptic curve over $K$}\\[3pt]
% by an integral Weierstrass equation }\\[3pt]
\Delta\textrm{ or }\Delta_{E} & \textrm{ the discriminant of
$E$;}\\[3pt]% assumed different from $0$;}\\[3pt]
j\textrm{ or }j_{E} & \textrm{ the $j$-invariant of $E$;}\\[3pt]
h(E) & \textrm{ the height $h(E):=\frac{1}{12}\max (h(j),h(\Delta ))$
of $E$;}\\[3pt]
P& \textrm{ a point of $E(K)$;}\\[3pt]
h(P) & \textrm{ the height $h(P):=\frac{1}{2}h(x(P))$ of $P$}\\[3pt]
\widehat{h}(P) & \textrm{ the canonical height $\widehat{h}(P)
:=\displaystyle\lim_{n\longrightarrow\infty}\tfrac{h(nP)}{n^{2}}$ of $P$;}
\end{array}$

\medskip

Generally speaking, when the existence of a uniform constant is
postulated, what is meant is a constant independent of the choice
of the elliptic curve (and the point studied if there is one). Of
course, such a constant may depend on the choice of the base
field~$K$.

\subsection{Reduction to Siegel's Theorem}

For ease of exposition, define the \emph{Lang ratio} of $P\in
E(K)$ to be $\rho(P, E)=\h(P)/\max\{1, h(E)\}$. Then Lang's
conjecture says that there exists $\delta>0$ such that
$$
\rho(P, E)\geq \delta,
$$
for all minimal curves $E/K$ and all non-torsion points $P\in E(K)$.
Silverman~\cite{silvermansiegel} has shown,
for number fields $K$, that $\rho(P, E)$ may be bounded
below in terms of the number of primes at which $E/K$ has split
multiplicative reduction.  Expanding
on these ideas, Hindry and Silverman \cite{hindrysilverman} showed
that $\rho(P, E)$ may be bounded
below in terms of an upper bound on the Szpiro ratio of $E/K$, that is, the ratio of the log-discriminant
of $E$ to the log-conductor.  Hindry and Silverman also showed that if
$K$ is a one-dimensional function
field, then the Szpiro ratio of an $S$-minimal elliptic curve (where $S$ is
a set of primes of $K$) is bounded
above absolutely in terms of $S$;
in particular, with $S=\{\infty\}$, Lang's Conjecture holds. In this
paper, it will
often be convenient to work in terms of $\rho(P, E)$ in order to
obtain unconditional results.

The following is a more precise version of Theorem~\ref{mainQ},
valid both for $\QQ$ and for $\QQ(t)$.

\begin{theorem}\label{th:main}
For any $\delta>0$, there is a constant $M_\delta$ with the following
property:
Let $B=(B_n)$ be an elliptic divisibility sequence arising from
a magnified $K$-rational point $P$ on an elliptic curve $E$ in minimal form.
Write $\sigma:E'\rightarrow E$ with $E'$ in minimal form, and $\sigma(P')=P$.
If $\rho(P', E')\geq\delta$, then the number
of prime power terms $B_{n}/B_1$ is bounded by $M_{\delta}$.
\end{theorem}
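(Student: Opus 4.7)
The plan is to combine the canonical multiplicative factorization of $B_n$ induced by the isogeny, as in \cite{primeds}, with Silverman's quantitative Siegel theorem applied to the source pair $(E', P')$, where the Lang ratio is controlled by hypothesis. First I would set up the factorization: since $P = \sigma(P')$, writing $x \circ \sigma$ as a rational function on $E'$ and tracking denominators place by place gives a decomposition
\[
B_n = D_n \, F_n \cdot U_n,
\]
where $D_n = B'_n$ is the denominator of $x(nP')$ on $E'$, $F_n$ is a cofactor arising from the isogeny, and $U_n$ is a correction supported on a finite set of places depending only on $\sigma$ and the bad reduction of $E$ and $E'$. Combined with the height identity $\h(P) = \deg(\sigma)\,\h(P')$ and the standard comparison $\log|B_n|_\infty = \h(P)n^2 + O(h(E))$, this yields
\[
\log|D_n|_\infty = \h(P')n^2 + O(h(E')), \qquad \log|F_n|_\infty = (\deg\sigma - 1)\h(P')n^2 + O(h(E')),
\]
so both factors grow quadratically, with $O$-terms made uniform via $\rho(P',E') \ge \delta$.

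The second step is to invoke Silverman's quantitative form of Siegel's theorem from \cite{silvermansiegel}, applied to $(E',P')$: for every $\epsilon > 0$ there is $N = N(\epsilon,\delta)$ such that, for $n \ge N$ and every non-archimedean $v$,
\[
v(D_n)\log|\pi_v|_\infty \;\le\; \epsilon\,\h(P')\,n^2.
\]
Crucially, the dependence on $(E',P')$ factors through the Lang ratio, so the hypothesis $\rho(P',E') \ge \delta$ makes $N$ uniform across all admissible pairs. Now suppose $B_n/B_1 = p^k$ is a prime power in $\Ocal_K$. Since $D_n$ divides $B_n$ outside the fixed bad-place correction $U_n$, the quotient $D_n/\gcd(D_n,B_1\cdot U_n)$ is a power of $p$, and the Siegel bound forces
\[
\log|D_n|_\infty \;\le\; v_p(D_n)\log|p|_\infty + O(h(E')) \;\le\; \epsilon\,\h(P')n^2 + O(h(E')).
\]
Comparing with $\log|D_n|_\infty = \h(P')n^2 + O(h(E'))$ and using $h(E') \le \max\{1,\h(P')/\delta\}$ yields
\[
(1-\epsilon)\h(P')n^2 \;\le\; O_\delta(\h(P')),
\]
which caps $n$ by a threshold $n_0(\epsilon,\delta)$ depending only on $\epsilon$ and $\delta$. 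Setting $\epsilon = 1/2$ produces the required constant $M_\delta := n_0(1/2,\delta)$.

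The main obstacle is the uniformity step: Silverman's theorem is usually stated with implicit constants tied to the individual curve and point, and the heart of the argument is to confirm that the dependence factors through $\rho(P',E')$, so that the hypothesis $\rho(P',E') \ge \delta$ really delivers a bound $N$ independent of the pair. This is also why the Lang-ratio assumption is placed on the source $(P',E')$ rather than on $(P,E)$: $E'$ is minimal by the standing hypothesis, whereas uniformizing the ratio for $E$ directly would require knowing how $h(E)$ relates to $h(E')$ as $\deg\sigma$ varies. A secondary nuisance is the precise identification and control of the finite set of places at which the factorization fails to be exact, but these contribute only an $O(h(E'))$ error that is absorbed in the final comparison once $\rho(P',E')\ge\delta$ is used.
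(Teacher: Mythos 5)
Your overall strategy---factor $B_n$ through the isogeny, force quasi-integrality from the prime-power condition, and invoke Silverman's quantitative Siegel theorem with the Lang ratio $\rho(P',E')\ge\delta$ controlling the uniformity---is exactly the backbone of the paper's proof in the cyclic case, and the height bookkeeping you sketch matches the paper's Lemma on quasi-integrality. However, there are two genuine gaps.

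First, you misstate the content of Silverman's theorem. You claim that for every $\epsilon>0$ there is a threshold $N=N(\epsilon,\delta)$ such that \emph{for all} $n\ge N$ and every non-archimedean $v$ one has $v(D_n)\log|\pi_v|_\infty\le\epsilon\,\h(P')n^2$, and you conclude by ``capping $n$'' below some $n_0(\epsilon,\delta)$. That is not what the quantitative Siegel theorem gives over $\QQ$. Silverman bounds the \emph{number} of $(\epsilon,C)$-quasi-integral points in a rank-one subgroup (in terms of $\epsilon$, $C$, the rank, and the Lang ratio); it does not produce an index beyond which none occur. If your threshold claim were correct, it would immediately answer the ``Question'' raised after Theorem~\ref{mainQt}, which the authors explicitly say they cannot resolve over $\QQ$. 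The correct conclusion from the quasi-integrality reduction is a bound on how many indices $n$ can yield a prime power, which is exactly what the theorem asserts; you should not expect to bound $n$ itself in the rational case.

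Second, you have suppressed the dependence on $m=\deg\sigma$. Your error terms are $O(h(E'))$, but the correct bounds carry an additional $O(\log m)$, and for this to be absorbed you need $m$ bounded. That is true for cyclic isogenies (Mazur's theorem, $m\le 163$), but if $P\in kE(\QQ)$ then $\sigma$ can be $[k]$ with $k$ arbitrary, and the $\log m = 2\log k$ term ruins uniformity. The paper handles this by splitting into two cases: when $P\in kE(\QQ)$ it replaces the Siegel argument entirely with the Ingram--Silverman uniform primitive divisor theorem applied to the sequence $(B_{nP'})$, showing that a prime-power $B_{nP}/B_P$ forces either $nk$ or $n$ to lie in the uniformly bounded set of indices without primitive divisors; only when $\sigma$ is (without loss) cyclic does it invoke Mazur and run the quasi-integrality argument. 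Your proposal needs this case split, or some other device to control $\log m$ when the isogeny is a multiplication map.
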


In light of the above-mentioned results of Hindry and Silverman,
in the case $K=\QQ(t)$, there is a uniform constant $\delta>0$
such that the condition $\rho(E', P')\geq \delta$ is always
satisfied. If Szpiro's Conjecture holds, then this is true of
$K=\QQ$ as well.

Similar methods allow a result of the following kind:

\begin{theorem}\label{th:two}
For any $\delta>0$, there is a constant $C_\delta$ with the following
property:
Let $\phi:E'\rightarrow E$ be an isogeny of minimal elliptic curves, %, under the assumptions of Theorem~\ref{th:main}.
%there is a constant $C=C(\delta)$ such that
then for any subgroup $\Gamma'\subseteq E'(\QQ)$ such that
$\min_{P'\in\Gamma'}\{\rho(E',P')\}\ge\delta$,
$$\#\{P\in\phi(\Gamma'):B_P\text{ is a prime power}\}\leq C_\delta^{1+\rank(\Gamma')}.$$
\end{theorem}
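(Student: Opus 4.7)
The plan is to reduce Theorem~\ref{th:two} to the single-point result Theorem~\ref{th:main} via a standard cone-packing argument on $\Gamma'$. After quotienting out $\Gamma'_{\mathrm{tors}}$ (which contributes only boundedly many points, absorbable into $C_\delta$), the free part $\Gamma'/\Gamma'_{\mathrm{tors}}\otimes\RR\cong\RR^r$ with $r=\rank(\Gamma')$ becomes a Euclidean space under $\h$. The heuristic is: Theorem~\ref{th:main} already handles the ``one-direction'' case (multiples of a fixed generator map to an elliptic divisibility sequence with only $M_\delta$ prime-power terms), and we need to show that $\Gamma'$ decomposes into not too many ``one-direction'' pieces.

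To this end I would fix $\epsilon=\epsilon(\delta)>0$ and cover the unit sphere of $(\RR^r,\h)$ by spherical caps of angular diameter at most~$\epsilon$; by standard sphere packing, at most $N\le c_1(\epsilon)^r$ caps suffice. Pulling back gives cones $\mathcal C_1,\dots,\mathcal C_N$ covering $\Gamma'\setminus\Gamma'_{\mathrm{tors}}$. Within a single cone $\mathcal C_i$, the parallelogram law
\[
\h(Q'-P')=\h(Q')+\h(P')-2\sqrt{\h(P')\h(Q')}\cos\theta(P',Q'),
\]
combined with $\cos\theta\ge\cos\epsilon$ and the hypothesis $\h(Q'-P')\ge\delta$ (valid since $Q'-P'\in\Gamma'$ is non-torsion when $Q'\ne P'$), forces a gap $\h(Q')\ge(1+\eta)\h(P')$ for some $\eta=\eta(\epsilon,\delta)>0$ once $\h(P')$ exceeds a threshold depending on $\delta$. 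Thus the points of $\mathcal C_i$, ordered by height, grow geometrically past the threshold, playing the role played by integer multiples of a fixed point in Theorem~\ref{th:main}.

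I would then run the proof machinery of Theorem~\ref{th:main} inside each cone: the isogeny $\phi:E'\to E$ induces a canonical factorization $B_{\phi(P')}=D_1(P')\cdot D_2(P')$ (up to a bounded gcd), and Silverman's quantitative Siegel Theorem provides both upper and lower bounds on each factor in terms of $\h(P')$. When $B_{\phi(P')}$ is a prime power, both $D_1$ and $D_2$ must essentially be powers of the same prime, which in conjunction with the geometric-growth gap of Step~2 forces the offending $P'$ into a set of size at most $M=M(\delta)$ per cone. Summing over the $N\le c_1^r$ cones and the torsion contribution yields
\[
\#\{P\in\phi(\Gamma'):B_P\text{ prime power}\}\le c_1^r\cdot M\le C_\delta^{1+r},
\]
with $C_\delta=\max(c_1,M,|\Gamma'_{\mathrm{tors}}|)$.

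The principal obstacle is the Siegel step inside a cone. The proof of Theorem~\ref{th:main} is written for the arithmetic sequence of multiples $nP'$, where local Néron heights decompose as a quadratic main term in $n$ plus a controlled error depending only on reduction data. Extending the requisite upper and lower bounds uniformly to a cone's worth of points, rather than to $\ZZ\cdot P'$, requires that the same Néron-local estimates be applied to general elements of $\Gamma'$, using the cone gap to recover the geometric growth in height that drives the argument. I expect this to go through with care, but the bookkeeping — particularly at the finitely many places where the isogeny is not étale — is the part where the work lies.
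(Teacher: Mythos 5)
The paper does not spell out a proof of Theorem~\ref{th:two}; it says only that ``similar methods'' give it. But the intended argument is much shorter than yours, and the difficulty you flag in your final paragraph is a genuine gap in your route which the simpler route avoids.

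The crucial point is that Theorem~\ref{th:two} asks for $B_P$ \emph{itself} to be a prime power, not the ratio $B_P/B_{P_0}$. This feeds straight into the canonical factorization of Lemma~\ref{easy-factorization}: writing $P=\phi(P')$ with $\phi$ of degree $m$, one has $B_{P'}\mid B_P$ with $v(B_P)\le v(B_{P'})+v(m)$ whenever $v(B_{P'})>0$. If $B_P$ is a power of a single prime, then either $B_{P'}=1$ (so $P'$ is an integral point of $E'$), or $B_{P'}$ is a positive power of that same prime and $h(B_P)\le h(B_{P'})+h(m)\le h(P')+h(m)$. The latter gives $h(B_P)\le \tfrac1m\h(P)+O(h(E')+h(m))$, i.e.\ $P$ is $(\tfrac1m,\,O(h(E')+h(m)))$-quasi-integral with $\tfrac1m\le\tfrac12$. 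One then applies Silverman's quantitative Siegel theorem \cite{silvermansiegel} twice: to $\Gamma'\subseteq E'(\QQ)$ for the integral points in case one, and to $\phi(\Gamma')\subseteq E(\QQ)$ for the quasi-integral points in case two. Silverman's theorem \emph{already} has the form $C^{1+\rank}$, with $C$ depending only on the quasi-integrality parameters and a lower bound on $\rho$; the lower bounds on $\rho$ come from the hypothesis $\rho(P',E')\ge\delta$, Lemma~\ref{heights}, the relation $\h(\phi(P'))=m\h(P')$, and the cyclic/$[k]$ split as in the proof of Theorem~\ref{th:main}. No cone decomposition is needed at this level.

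Your proposal, by contrast, tries to re-derive the $C^{1+\rank}$ dependence from scratch by sphere packing and a parallelogram-law gap principle, and then to ``run the proof machinery of Theorem~\ref{th:main}'' inside each cone. The machinery of Theorem~\ref{th:main} --- the primitive-divisor counting of \cite{is}, the divisibility relations among $B_{nP'}$, the case split on $\gcd(k,n)$ --- is specific to the arithmetic progression $\{nP':n\in\ZZ\}$ and does not transfer to an arbitrary geometrically-growing sequence of points in a cone. You acknowledge this in your last paragraph as a bookkeeping issue, but it is more than bookkeeping: those arguments have no analogue off the progression. Moreover, the cone construction you sketch is precisely the content of Silverman's proof of the quantitative Siegel theorem, which the paper treats as a black box. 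Once you invoke that black box directly and use the cleaner quasi-integrality criterion above (no $B_1$ in the denominator means a genuine dichotomy, not the three-way split of Lemma~\ref{quasiint}), the EDS-specific apparatus drops out entirely.
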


It follows immediately that the number of prime power terms
in the sequence $(B_n)_{n\geq 1}$
is bounded uniformly.  This observation is not
nearly as strong as
Theorem~\ref{th:main} unless one restricts attention to sequences in
which $B_1=1$, which
are in some sense rare (corresponding to integral points on elliptic curves).

\subsection{Behaviour Under Isogeny}The first lemma shows how
primes behave under isogeny, and demonstrates that the
denominators of points in the image of an isogeny admit a
canonical factorization - see~(\ref{demo}).

\begin{lemma}\label{easy-factorization} Let $E$ be a minimal elliptic
curve defined over $K$, and let
$\sigma : E'\rightarrow E$ be an isogeny of degree $m$.
Then we have
$$v\left(B_P\right)\leq v\left(B_{\sigma(P)}\right).$$
If $E'$ is also minimal, then $v(B_P)>0$ implies
$$v\left(B_{\sigma(P)}\right)\leq v\left(B_P\right)+v(m).$$
\end{lemma}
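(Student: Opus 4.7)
The proof is local at each prime $v$, via the formal group of the minimal model at $v$. Since $A_P$ and $B_P$ are coprime in $\mathcal O_K$, the condition $v(B_P)>0$ is equivalent to $P$ lying in the kernel of reduction modulo $v$; under the identification of this kernel with $\hat E'(\mathfrak m_v)$ via the parameter $z=-x/y$, one has $v(B_P)=v(z(P))$, and analogously $v(B_{\sigma(P)})=v(z(\sigma(P)))$ whenever $\sigma(P)$ reduces to the identity of $\tilde E$.

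Because $E$ is in minimal form at $v$, the isogeny $\sigma$ extends over $\mathcal O_v$ and induces a formal group homomorphism $\hat\sigma(T)\in T\cdot\mathcal O_v[[T]]$ (Silverman, \emph{The Arithmetic of Elliptic Curves}, Chapters~IV and~VII); in particular $v(\hat\sigma(T))\geq v(T)$ for all $T\in\mathfrak m_v$. The first inequality now follows at once: for $v(B_P)\leq 0$ it is trivial since $B_{\sigma(P)}\in\mathcal O_K$, and for $v(B_P)>0$ one has
\[
v(B_{\sigma(P)})=v(\hat\sigma(z(P)))\geq v(z(P))=v(B_P).
\]

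For the upper bound, assume $E'$ is also minimal, and let $\tau:E\to E'$ denote the dual isogeny of $\sigma$. Since $\tau\circ\sigma=[m]_{E'}$, applying the first inequality to $\tau$---whose source and target are now both minimal---gives $v(B_{\sigma(P)})\leq v(B_{\tau(\sigma(P))})=v(B_{mP})$. The problem thus reduces to showing $v(B_{mP})\leq v(B_P)+v(m)$ whenever $v(B_P)>0$, which is a form of Ayad's strong-divisibility theorem for elliptic divisibility sequences. Factoring $[m](T)=T\cdot h(T)$ with $h\in\mathcal O_v[[T]]$ and $h(0)=m$, the constant term of $h$ dominates in the ``unramified'' regime $v(z(P))>v(m)$ and gives the estimate immediately; the remaining regime arises only at primes $v$ with $v(m)>0$, and so is vacuous over $K=\QQ(t)$. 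Over $K=\QQ$ it is handled via the formal logarithm identity $\lambda([m]T)=m\lambda(T)$ on its domain of convergence $\{v(T)>v(p)/(p-1)\}$, supplemented where necessary by an explicit computation in the formal group law. This final ramified-regime estimate is the most delicate point of the argument, although in both base fields of interest the obstruction is benign---vacuous over $\QQ(t)$, and confined over $\QQ$ to the finitely many primes dividing $m$, with $p=2$ the only genuinely subtle case.
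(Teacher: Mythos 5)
Your proof mirrors the paper's argument at every step: the first inequality comes from the $\Ocal_v$-integrality of the induced formal-group homomorphism, and the second from composing with the dual isogeny to reduce to the claim $v(B_{mP}) \leq v(B_P) + v(m)$, which both you and the paper then attack through the formal group of $E'$. (One small slip: for the first inequality it is the \emph{source} curve whose minimality is the relevant hypothesis for integrality of $F_\sigma$, as the paper's proof itself notes, not $E$ as you state.) The paper asserts the key inequality $v(B_{mP}) \leq v(B_P) + v(m)$ with no supporting argument beyond ``the argument above tells us''; you go further and correctly observe that the formal logarithm gives it whenever $v(z(P)) > v(p)/(p-1)$, and you flag the boundary case $p=2$, $v(z(P))=1$ as subtle---but you leave it as an unspecified ``explicit computation in the formal group law,'' and this is a genuine gap, since the bound is \emph{not} automatic there. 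Concretely, if $\hat{E'}$ has good ordinary reduction at $2$ and its nonzero $2$-torsion point $T_0$ satisfies $v(z(T_0))=1$, then Weierstrass preparation writes $[2](T)=T\bigl(T - z(T_0)\bigr)u(T)$ with $u$ a unit in $\Ocal_v[[T]]$, so $v([2]z)=v(z)+v\bigl(z-z(T_0)\bigr)$, which strictly exceeds $v(z)+v(2)$ whenever $z \equiv z(T_0)$ modulo the square of the maximal ideal. This failure is inherited by $[m]$ for any even $m$, so neither your argument nor the paper's sketch actually proves the stated upper bound at $v=2$. To close the gap one should either invoke a precise valuation formula for $v(B_{nP})$ that spells out the exceptional behaviour at $p=2$ (such results exist in the literature on $p$-adic properties of division polynomials), or settle for the lemma with a bounded additive correction at $v=2$, which would still serve the downstream application in Lemma~\ref{quasiint}. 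You deserve credit for isolating the exact place where the paper's proof is silent.
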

\begin{proof}
On the assumption that $E'$ is minimal at $v$, it is not hard to show
(see, for example, the exposition
in \cite{streng}) that the isogeny $\sigma$ induces a map of formal groups
$F_\sigma:\hat{E'}\rightarrow \hat{E}$
defined over $\Ocal_v$ with $F_\sigma(0)=0$ (Streng proves this for number
fields, but the proof works for any local field).
It follows immediately that if $v(x(P))<0$, as $F_\sigma(z)\in \Ocal_v[[z]]$
vanishes at $0$,
$$v(B_{\sigma(P)})=v(F_\sigma(z))\geq v(z)=v(B_P).$$
If $E$ is minimal as well, we may apply the same argument to the
dual isogeny $\hat{\sigma}:E\rightarrow E'$,
noting that the composition is $[m]$.  The argument above now tell us that
$$v(B_{\sigma(P)})\leq v(B_{mP})\leq v(B_P)+v(m).$$
\end{proof}

\begin{lemma}\label{heights}
Let $E$ and $E'$ be minimal elliptic curves defined over $K$, and
let $\sigma:E'\rightarrow E$ be an isogeny of degree $m$.
Then
$$h(E)\ll h(E')+h(m).$$
\end{lemma}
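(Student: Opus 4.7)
The plan is to route the comparison through the Faltings (modular) height $h_F$, which is nearly isogeny-invariant, and then translate between $h_F$ and the naive height $h(E)=\tfrac{1}{12}\max\{h(j_E),h(\Delta_E)\}$.

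First I would invoke the isogeny formula for $h_F$: for any isogeny $\sigma:E'\to E$ of degree $m$, one has $|h_F(E)-h_F(E')|\ll h(m)$. Over a number field this is essentially Faltings' theorem, applied once to $\sigma$ and once to the dual $\widehat{\sigma}$ and using $\widehat{\sigma}\circ\sigma=[m]$; over $\Q(t)$ the analogous statement is a standard consequence of the behaviour of the Hodge bundle on the semistable N\'eron model under an \'etale quotient.

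Second, I would compare $h_F(E)$ with the naive height. For minimal elliptic curves there are well-known inequalities of the form $|h_F(E)-\tfrac{1}{12}h(\Delta_E)|\ll 1$, from a local computation of the Faltings metric at each place of bad reduction, together with $h(j_E)\ll h_F(E)+1$, which follows from the identity $j=c_4^3/\Delta$ combined with $1728\Delta=c_4^3-c_6^2$. Since a separable isogeny preserves the conductor, the corrections at places of bad reduction agree on the two sides up to $O(h(m))$. Chaining the estimates produces
\[
h(E)\ll h_F(E)+1\ll h_F(E')+h(m)\ll h(E')+h(m),
\]
with implied constants depending only on $K$.

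The trickiest step is the comparison between $h_F$ and $\tfrac{1}{12}h(\Delta_E)$ at primes of additive reduction, where the minimal Weierstrass discriminant may substantially exceed the stable one; the key observation that tames this is that such primes occur for $E$ and $E'$ simultaneously (same conductor), so the discrepancies cancel up to $O(h(m))$. An alternative route, avoiding $h_F$ altogether, would factor $\sigma$ into cyclic prime-degree pieces and, for each cyclic $\ell$-step, argue via the modular polynomial $\Phi_\ell$ for the $j$-comparison and a local Tate-parameter analysis for the discriminant comparison, but controlling the multiplicative dependence on $\ell$ in $\Phi_\ell$ is precisely where the Faltings-height approach is cleaner.
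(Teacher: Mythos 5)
Your proposal is essentially correct but takes a genuinely different route from the paper. The paper handles the two base fields separately: over a number field it cites the Normalizing Lemma of Masser--W\"ustholz, which produces an isomorphic copy $E^*$ of $E$ with $h(E^*)\ll h(E')+\log m$ by direct manipulation of Weierstrass models, and then uses minimality of $E$ to get $h(E)\le h(E^*)$; over $\Q(t)$ it observes that $E$ and $E'$ have the same conductor, so the Pesenti--Szpiro bound on the Szpiro ratio gives $\deg\Delta_E\ll\deg\Delta_{E'}$ directly, with no dependence on $m$ at all. Your Faltings-height route unifies the two cases under a single, conceptually cleaner isogeny-invariance principle $|h_F(E)-h_F(E')|\ll h(m)$, at the cost of needing a comparison between $h_F$ and the naive height. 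One imprecision worth flagging: the claim $|h_F(E)-\tfrac{1}{12}h(\Delta_E)|\ll 1$ is not literally true, because the archimedean contribution to $h_F$ (the period integral) is only $O(\log\log(|j_E|+2))$ rather than $O(1)$; this does not break your argument, since a lower-order additive error is harmless for an $\ll$ conclusion, but the clean chain $h(E)\ll h_F(E)+1\ll h_F(E')+h(m)\ll h(E')+h(m)$ hides a ``for heights large enough, absorb the $\log\log$ error into the main term'' step that should be made explicit. Your observation that the additive-reduction discrepancies cancel because $E$ and $E'$ share a conductor is precisely the fact the paper exploits in its function-field branch, so the two proofs are not as disjoint as they first appear; yours simply packages it inside the $h_F$-versus-naive-height comparison. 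The paper's route is more elementary (it avoids $h_F$ entirely) and, in the function-field case, gives a sharper conclusion with no $h(m)$ term, whereas your route is more uniform across the two ground fields and generalizes more readily to other global fields.
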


\begin{proof}
In the number field case, this is a consequence of the Normalizing
Lemma of Masser and W\"{u}stholz \cite{mw}.
Note that Masser and W\"{u}stholz use the quantity
$$w(E)=\max\{1, h(g_2), h(g_3)\}$$ to measure the `size' of
an elliptic curve, where $g_2$ and $g_3$ are the usual invariants,
but $h(E)$ as defined above satisfies
$w(E)\ll h(E)\ll w(E)$.  The argument in \cite{mw}
produces \emph{some} isomorphic copy of $E$, say $E^*/K$
such that $h(E^*)\ll h(E')+\log m$.  Since $E$ and $E^*$ are
isomorphic, and $E$ is minimal, we have $h(E)\leq h(E^*)$.

If $K$ is a function field, this property is trivial by the
boundedness of the Szpiro ratio, together with the inequality
$\deg (\Delta_{E'})\le 6\deg (\Delta_{E})$.
To see this, note first that $E$ and $E'$ have the same conductor,
by~\cite[VII Corollary 7.2]{hs}.
The assertion is then a consequence of~\cite[Theorem
0.1]{Pesenti-Szpiro}, which bounds the conductor of $E'$ in
terms of $6\deg(\Delta_{E'})$, since the degree of the conductor of $E$ is a lower bound for
$\deg (\Delta_{E})$.
Note that if we assume the $abc$ conjecture for $K$, we may do the
same thing for
number fields, but a dependence on $m$ still exists.
\end{proof}

Now we return to the main theme of this section, by showing how to
reduce the problem
to an application of Siegel's Theorem.
\begin{definition}
Given $0<\epsilon <1$, together with a constant $C>0$, say that
$P\in E(K)$ is {\it $(\epsilon, C)$-quasi-integral} if
$$h(B_P)\leq \epsilon\h(P)+C.$$
\end{definition}
Siegel's Theorem, in its strong form, is the statement that for
any $\epsilon<1$ and any constant $C$, there are only finitely
many $(\epsilon, C)$-quasi-integral points in $E(K)$. Work of
Silverman \cite{silvermansiegel} refines this claim, showing that
the number of $(\epsilon, C)$-quasi-integral points in a subgroup
$\Gamma\subseteq E(K)$ may be bounded solely in terms of
$\epsilon$, $C$, $\operatorname{rank}(\Gamma)$, and a lower bound
on $\rho(P, E)$ for non-torsion $P\in \Gamma$. Elliptic
divisibility sequences are essentially rank one subgroups of
$E(K)$, and so under Lang's Conjecture a uniform bound is obtained
on the number of $(\epsilon, C)$-quasi-integral points in an
elliptic divisibility sequence. Note that if
$$Ch(E)>\delta \h(nP),$$
then $|n|\leq\sqrt{C/(\delta \rho(P, E))}$.

In particular, applying Silverman's version of Siegel's Theorem to
a rank-one subgroup of $E(K)$ generated by some point $P$ with
$\rho(P, E)$ greater than some uniform value, some dependence of
$C$ on $E$ is acceptable (in the sense that a uniform quantitative
result is recoverable), as long as $C=O(h(E))$.

It will be demonstrated that if $P$ is the image of a $K$-rational
point under an isogeny, then $B_{nP}/B_P$ being a power of a prime
is a non-trivial quasi-integrality condition.  The function field
case indicates that one would not expect this to be true more
generally.

\begin{lemma}\label{quasiint}
Let $\sigma:E'\rightarrow E$ be an isogeny of minimal elliptic
curves over $K$, and suppose that $P=\sigma(P')$
for some $P'\in E'(K)$.  If $B_{nP}/B_P$ is a power of a single
prime, then either $nP$ is $(\epsilon_1, C_1)$-quasi-integral,
with $\epsilon_1=\frac{1}{n^2}+\frac{1}{m}$ and $C_1=O(h(E)+h(m)+h(n))$,
or $nP'$ is $(\epsilon_2, C_2)$-quasi-integral,
with $\epsilon_2=\frac{m}{n^2}$ and $C_2=O(h(E')+h(m))$.
\end{lemma}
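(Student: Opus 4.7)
The plan is to write $B_{nP}/B_P = p^{e}$ for some prime $p\in\Ocal_K$ and integer $e\ge 0$, and then to split into cases according to whether $v_p(B_{nP'})$ is zero or positive. If it is zero, the $v\ne p$ part of Lemma~\ref{easy-factorization} forces $B_{nP'}\mid B_P$, and this divisibility translates directly into a bound on $h(B_{nP'})$ in terms of $\widehat h(nP')$. If $v_p(B_{nP'})>0$, the second inequality of Lemma~\ref{easy-factorization} at $v=p$ bounds the exponent $e$, and this bound on $e$ translates into a bound on $h(B_{nP})$ in terms of $\widehat h(nP)$. The remaining ingredients are the standard inequality $h(B_Q)\le\tfrac12 h(x(Q))=h(Q)$, the height comparison $|h(Q)-\widehat h(Q)|\ll h(E)$, the scaling $\widehat h(\sigma(Q))=m\,\widehat h(Q)$ together with $\widehat h(nQ)=n^2\widehat h(Q)$, and Lemma~\ref{heights} to interchange $h(E)$ and $h(E')$ at the cost of an $O(h(m))$ error.

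\emph{Case 1: $v_p(B_{nP'})=0$.} For every prime $v$ of $\Ocal_K$ one has $v(B_{nP'})\le v(B_P)$: for $v=p$ this is the hypothesis, and for $v\ne p$ it follows from $v(B_{nP'})\le v(B_{nP})=v(B_P)$, since the only prime that $B_{nP}$ and $B_P$ can differ at is $p$. Hence $B_{nP'}\mid B_P$, so
$$
h(B_{nP'})\;\le\;h(B_P)\;\le\;h(P)\;=\;\widehat h(P)+O(h(E))\;=\;\tfrac{m}{n^{2}}\,\widehat h(nP')+O(h(E')+h(m)),
$$
after invoking $\widehat h(P)=m\,\widehat h(P')$, $\widehat h(nP')=n^{2}\widehat h(P')$, and Lemma~\ref{heights}. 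This is the desired $(\epsilon_{2},C_{2})$-quasi-integrality of $nP'$.

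\emph{Case 2: $v_p(B_{nP'})>0$.} The second inequality of Lemma~\ref{easy-factorization} at $v=p$ gives
$$
e\;=\;v_p(B_{nP})-v_p(B_P)\;\le\;v_p(B_{nP'})+v_p(m),
$$
and multiplying by $\log|p|_\infty$ and bounding each term by the full height yields $e\log|p|_\infty\le h(B_{nP'})+h(m)$. Combining this with $h(B_P)\le h(P)=\widehat h(P)+O(h(E))=\tfrac{1}{n^{2}}\widehat h(nP)+O(h(E))$ and with $h(B_{nP'})\le h(nP')=\widehat h(nP')+O(h(E'))=\tfrac{1}{m}\widehat h(nP)+O(h(E'))$ gives
$$
h(B_{nP})\;=\;h(B_P)+e\log|p|_\infty\;\le\;\Bigl(\tfrac{1}{n^{2}}+\tfrac{1}{m}\Bigr)\widehat h(nP)+O\bigl(h(E)+h(m)\bigr),
$$
once again using Lemma~\ref{heights}. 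This is the desired $(\epsilon_{1},C_{1})$-quasi-integrality of $nP$.

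The main technical obstacle is the careful bookkeeping of the $v(m)$ summands that Lemma~\ref{easy-factorization} forces into both inequalities, together with the asymmetry between $h(E)$ and $h(E')$ that Lemma~\ref{heights} lets us erase up to an $O(h(m))$ error; the additional $O(h(n))$ allowed in $C_{1}$ in the statement is a buffer that the analysis above does not in fact need.
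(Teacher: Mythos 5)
Your proposal is correct and follows essentially the same route as the paper: you split into two cases via Lemma~\ref{easy-factorization}, deduce $B_{nP'}\mid B_P$ in one case and bound the single-prime contribution in the other, then compare heights using Lemma~\ref{heights} and the scaling $\widehat h(\sigma(Q))=m\widehat h(Q)$. The only cosmetic difference is that you partition on whether $v_p(B_{nP'})$ vanishes (where $p$ is the prime of $B_{nP}/B_P$) rather than on whether $B_{nP'}\mid B_P$, which leads to the same two arguments, and your observation that the $O(h(n))$ term in $C_1$ is superfluous is accurate.
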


\begin{proof}
Suppose that $B_{nP}/B_P$ is a power of a single prime.  Note
that, by Lemma~\ref{easy-factorization}, $v(B_{nP'})\leq
v(B_{nP})$ for all $v\in M_K^0$.  Suppose, for the moment, that
$v(B_{nP'})\leq v(B_P)$ for all $v\in M_K^0$.  In this case,
\begin{eqnarray*}
h(B_{nP'})&\leq& h(B_P)\\
&\leq & h(P)\\
&\leq & \h(P)+O(h(E))\\
&\leq & \frac{m}{n^2}\h(nP')+O(h(E')+h(m)).
\end{eqnarray*}

Now suppose that this is not the case, in other words,
$v(B_{nP'})>v(B_P)$ for some prime $v\in M_K^0$. Then
$v(B_{nP}/B_P)>0$, and this is the only prime for which this
happens. Furthermore, from Lemma~\ref{easy-factorization}
$$v(B_{nP}/B_P)\leq v(B_{nP})\leq v(B_{nP'})+h(m).$$
In particular,
\begin{eqnarray*}
h(B_{nP})&\leq & h(B_P)+h(B_{nP'})+h(m)+h(n)\\
&\leq & \h(B_P)+\h(B_{nP'})+O(h(E)+h(E')+h(m)+h(n))\\
&\leq & \left(\frac{1}{n^2}+\frac{1}{m}\right)\h(nP)+O(h(E)+h(m)+h(n)).
\end{eqnarray*}
\end{proof}

It has been known for a long time that sequences of the form
(\ref{eq:mersenne}) do produce new primes in a weaker sense.
Zsigmondy's Theorem \cite{zsigmondy} guarantees a new prime factor
for all terms with index~$n > 6$. The definition is made precise
now.

\begin{definition}
A nonzero term $B_n$ in a sequence $(B_n)$ of elements in $\mathcal O_K$ has a
{\it primitive divisor} $d$ if
\begin{itemize}
\item[(I)] $d$ is not a unit
\item[(II)] $d\mid B_n$
\item[(III)] $\gcd(B_m,d)$ is a unit for all $m<n$ with $B_m\neq 0$.
\end{itemize}
\end{definition}

Silverman \cite{silabc} proved the elliptic analogue of Zsigmondy's
Theorem: in any elliptic divisibility
sequence $B=(B_n)$, all the terms from some index onwards will have
a primitive divisor. This theorem has been proved in a strong
uniform manner in \cite{is} (see also \cite{emw} and \cite{pi_eds})
and and will be used crucially in
the following proof.

\begin{proof}[\sc Proof of Theorem~\ref{th:main} in the rational case.]
It will be shown that for every $\delta>0$ there is a constant
$M_\delta$ depending only on $\delta$ such that for every pair of
minimal elliptic curves $E', E/\QQ$ equipped with a non-trivial
isogeny $\sigma:E'\rightarrow E$, and every point $P=\sigma(P')$
with $\rho(P', E')\geq\delta$, there are at most $M_\delta$ values
$n$ such that $B_{nP}/B_P$ is a prime power.

The style of proof depends upon whether the isogeny is cyclic. %(in other
%words, it has cyclic kernel) or not.
Firstly, assume that $P\in kE(\QQ)$ for some integer $k\geq 2$. %(the
%non-cyclic case).
Then $P\in qE(\QQ)$ for some prime $q$, and it may be assumed that
$k$ is prime.

Suppose there are distinct non-archimedean valuations $v_1$ and $v_2$ such that
$$
v_1(B_{nkP'})>0\quad\text{and}\quad v_1(B_{kP'})=0
$$
and
$$v_2(B_{nP'})>0\quad\text{and}\quad v_2(B_{\gcd(k, n)P'})=0.$$
Then clearly $v_1(B_{nP}/B_P)>0$.  On the other hand, $v_2(B_{kP'})=0$, as
$$v_2(B_{\gcd(k, n)P'})=\min\{v_2(B_{nP'}), v_2(B_{kP'})\},$$ and so
$v_2(B_{nP}/B_P)>0$. Thus if $B_{nP}/B_P$ is a prime power, then either:
\begin{enumerate}
\item the primes diving $B_{nkP'}$ are at most those dividing $B_{kP'}$ or
\item the primes diving $B_{nP'}$ are at most those
dividing $B_{\gcd(k, n)P'}$.
\end{enumerate}
Let $Z(P', E')$ denote the set of $s$ such that $B_{sP'}$ has no
prime divisors other than those dividing $B_{tP'}$ for $t<s$. In
case (a), the term $B_{nkP'}$ in the elliptic divisibility
sequence defined by $P'$ has no primitive divisor, and so $nk\in
Z(P', E')$.   If (b) holds, and if $\gcd(k, n)<n$, then $n\in
Z(P', E')$. Thus, if $B_{nP}/B_P$ is a prime power, then either
$n\mid k$ (and recall that $k$ is assumed to be prime) or $n\in
Z(P', E')\cup\frac{1}{k}Z(P', E')$. By Theorem~7 of \cite{is}, $\#
Z(P', E')$ may be bounded solely in terms of $\rho(P', E')$ (Note
that the statement in \cite{is} is not in terms of $\rho(P', E')$,
but a simple modification of the proof shows that this is true).
Thus the number of $n$ such that $B_{nP}/B_P$ is a prime power is
bounded by some $M_{\delta,1}$ which depends only on $\delta$.

\medskip

%So assume from now that $\sigma$ is cyclic; in other words,
So now suppose $P\not\in kE(\QQ)$ for any integer $k\geq 2$.  It
follows that $\sigma$ is a composition of cyclic isogenies over
$\QQ$, and so it may be assumed (without loss of generality) that
$\sigma$ itself is cyclic.  In particular, there are (by work of
Mazur \cite{mazur}) only finitely many possible values for
$m=\deg(\sigma)$.   Thus Lemma~\ref{quasiint}, replacing $h(m)$
with a sufficiently large constant, tells us that if
$B_{nP}/B_{P}$ is a prime power, then either $nP$ is $(\epsilon_1,
C_1)$-quasi-integral, with $\epsilon_1=\frac{1}{n^2}+\frac{1}{m}$
and $C_1=O(h(E)+h(n))$, or $nP'$ is $(\epsilon_2,
C_2)$-quasi-integral, with $\epsilon_2=\frac{m}{n^2}$ and
$C_2=O(h(E'))$.  If $\frac{m}{n^2}\leq\frac{3}{4}$, there is a
bound on the number of points satisfying the latter condition
depending only on $\rho(P', E')$. Clearly the number of $n$ such
that $\frac{m}{n^2}>\frac{3}{4}$ is bounded absolutely.

Now note that $\rho(P', E')\geq \delta$ and Lemma~\ref{heights} implies
that $\rho(P, E)\geq\delta_2$
for some $\delta_2>0$.  Thus, the first condition above becomes
$$h(B_{nP})\leq \left(\frac{1}{n^2}+\frac{1}{m}\right)\h(nP)+O(h(E)+h(n)).$$
Finally, for any $C$ and any $\delta_3>0$, we can
ensure that
$Ch(n)<\delta_3\h(nP)$ by taking $n$
large enough.  Thus the above becomes
$$h(B_{nP})\leq \left(\frac{1}{n^2}+\frac{1}{m}+\delta_3\right)\h(nP)+O(h(E))$$
for sufficiently large $n$, where $\delta_3$ may be taken as any positive
value. Applying the quantitative Siegel
Theorem, the proof is complete.
\end{proof}

\begin{remark}
Work of the second author \cite{pi, pi2}, in the case $K=\QQ$, also
assuming Lang's Conjecture, can be applied to
produce a uniform
bound $M$ such that for any elliptic divisibility sequence generated by a
magnified point $P$ on a minimal curve,
there are at most two values $n>M$ such that $B_{nP}$ is a prime power.
\end{remark}

In the function field case, a stronger version of
Theorem~\ref{th:main} will be proved, namely, that there is a
uniform bound $N_0$ such that $B_n/B_1$ is not a prime power for
all $n\ge N_0$. First, one lemma is required.

\begin{lemma}\label{updlemma}Let $K=\QQ(t)$ and
let $B=(B_n)$ be an elliptic divisibility
sequence arising from a non-torsion point on a minimal elliptic
curve defined over~$K$. There
is a uniform bound on the indices~$n$ for which $B_n$ fails to have a
primitive divisor.
\end{lemma}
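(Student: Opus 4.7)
The plan is to adapt the uniform primitive-divisor strategy of \cite{is, silabc} to the function field case, exploiting two simplifications specific to $K = \QQ(t)$: the residue fields at all finite places have characteristic zero, and the Hindry--Silverman theorem \cite{hindrysilverman} gives an unconditional absolute lower bound of the form $\h(P) \geq \delta_0 h(E)$ for non-torsion $P$ on a minimal $K$-curve.

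First I would carry out a local analysis. At each finite place $v$ where $E$ has good reduction, let $n_v$ denote the order of the reduction $\tilde P$ in $\tilde E(\kappa_v)$ (when finite). Since the residue field $\kappa_v = \QQ[t]/(f)$ is a number field, every nonzero integer is a unit at $v$; consequently, the formal-group argument appearing in the proof of Lemma~\ref{easy-factorization} shows that $v(B_{nP}) = v(B_{n_v P})$ for every multiple $n$ of $n_v$ and vanishes otherwise. Setting
$$
c_d := \sum_{v \,:\, n_v = d} v(B_{dP})\, \deg(v),
$$
and absorbing into an error of size $O(h(E))$ the contributions from the finitely many bad-reduction places, one obtains
$$
\deg B_n = \sum_{d \mid n} c_d + O(h(E)),
$$
so that $c_n$ is precisely the degree of the primitive part of $B_n$; hence $B_n$ admits a primitive divisor exactly when $c_n > 0$.

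The canonical-height identity $\h(nP) = n^2 \h(P)$, combined with the standard Weil--canonical comparison and the control $\hat\lambda_{v_\infty}(nP) = n^2 \hat\lambda_{v_\infty}(P) + O(h(E))$, then allows one to write $\deg B_n = \alpha n^2 + O(h(E))$ with $\alpha := 2(\h(P) - \hat\lambda_{v_\infty}(P))$. Mobius inversion, applied to the decomposition of $\deg B_n$ above, yields
$$
c_n = \alpha\, J_2(n) + O(\tau(n)\, h(E)),
$$
where $J_2(n) = n^2 \prod_{p \mid n}(1 - p^{-2})$ is the Jordan totient. Since $J_2(n) \geq n^2/\zeta(2)$ while $\tau(n) = O(n^{\varepsilon})$ for every $\varepsilon > 0$, the leading term dominates once $n$ exceeds a threshold depending only on the ratio $\alpha / h(E)$, whereupon $c_n > 0$ and $B_n$ has a primitive divisor.

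The main technical obstacle is to establish the uniform bound $\alpha \gg h(E)$, i.e.\ to show that the canonical height cannot be essentially concentrated at the infinite place $v_\infty$ of $\QQ(t)$. This reduces to bounding $\hat\lambda_{v_\infty}(P)/\h(P)$ away from $1$ on minimal curves using the explicit description of the local canonical height at $v_\infty$ on a minimal Weierstrass model, and then combining this with the Hindry--Silverman inequality $\h(P) \gg h(E)$. Once this is in place, the threshold $N_0$ above depends only on $K$, and the lemma follows.
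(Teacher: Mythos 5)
The central claim driving your Mobius-inversion argument --- that $\hat\lambda_{v_\infty}(nP) = n^2\hat\lambda_{v_\infty}(P) + O(h(E))$ with an absolute implied constant --- is false, and this is a genuine gap. Local N\'eron heights at a single place are not quadratic up to bounded error; only the sum over all places is. Concretely, suppose $E$ has good reduction at $v_\infty$ and the reduction $\tilde P \in \tilde E(\QQ)$ has finite order $m_\infty$. Because the residue field has characteristic zero, multiplication by $n/m_\infty$ is an isomorphism of the formal group over $\Ocal_{v_\infty}$, so $v_\infty(x(nP)) = v_\infty(x(m_\infty P))$ whenever $m_\infty \mid n$, and $\lambda_{v_\infty}(nP) = 0$ otherwise. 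Thus $\lambda_{v_\infty}(nP)$ is a \emph{bounded oscillating} function of $n$, taking only the two values $0$ and $L := \lambda_{v_\infty}(m_\infty P)$; it is not asymptotic to $n^2\lambda_{v_\infty}(P)$. In the extreme case $m_\infty = 1$ one gets $\lambda_{v_\infty}(nP) = \lambda_{v_\infty}(P)$ for all $n$, so your formula predicts the wrong order of growth entirely. Since $L$ can be as large as a fixed multiple of $\h(P)$ (which is not $O(h(E))$ --- Lang gives a lower bound, not an upper bound), the resulting error in $\deg B_n$ is $O(\h(P))$, not $O(h(E))$, and your announced $\alpha = 2(\h(P) - \hat\lambda_{v_\infty}(P))$ is not the coefficient that actually appears. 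Your argument can likely be repaired by replacing $\alpha$ with $2\h(P)$ and the error by $O(\h(P))$ (then using Mazur to bound $m_\infty$, hence $L$), but as written the asymptotic you feed into the inversion is incorrect, and your stated ``main technical obstacle'' --- bounding $\hat\lambda_{v_\infty}(P)/\h(P)$ away from $1$ --- is not the issue that actually arises.

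The paper's proof (Theorem~\ref{upd}) sidesteps this entirely by never asking for a two-sided asymptotic for $\deg B_n$. From the absence of a primitive divisor one gets the divisibility $B_n \mid \prod_{p \mid n} B_{n/p}$ (the same char-zero rigidity you invoke), hence $\deg B_n \le \sum_{p \mid n}\deg B_{n/p}$. Then Lemma~\ref{compareheight} gives a \emph{one-sided} lower bound $\deg B_n \ge \tfrac34 h(nP) - \tfrac{3}{16}\deg\Delta_E$, while the na\"ive-vs-canonical comparison gives the trivial upper bound $\deg B_{n/p} \le h\bigl((n/p)P\bigr) + O(h(E))$. The whole argument rests on the numerical fact $\tfrac34 > \sum_{p}\tfrac1{p^2}$, which lets the mismatched leading coefficients ($\tfrac34\h(P)$ below, $\h(P)$ above) still produce a contradiction for large $n$ once Lang's conjecture controls the $O(h(E))$ errors. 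Your approach, if patched, would buy a cleaner conceptual picture (the primitive part has degree $\approx 2\h(P)J_2(n)$), but requires more delicate control on the oscillation at $v_\infty$; the paper trades that for the elementary numerical inequality and a one-sided estimate, which is more robust and avoids any need to understand $\lambda_{v_\infty}(nP)$ for $n > 1$.
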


Lemma~\ref{updlemma} will be proved in an explicit way later --
see Theorem~\ref{upd}.

\begin{proof}[\sc Proof of Theorem~\ref{th:main} in the function field
case.]
Lemma~\ref{quasiint} shows that if $B_{nP}/B_P$ is a prime power, then
$$h(B_{nP})\leq \left( \frac{1}{m}+\frac{1}{n^2}\right)\h(nP)+O(h(E))$$
or
$$h(B_{nP'})\leq\left(\frac{m}{n^2}\right)\h(nP')+O(h(E')).$$
If $n\geq \sqrt{2m}$ and $m\geq 2$, then this implies
$$h(B_{nP})\leq \frac{3}{4}\h(nP)+O(h(E))$$
or
$$h(B_{nP'})\leq \frac{1}{2}\h(nP')+O(h(E')),$$
respectively. As $\rho(E, P)$ and $\rho(E', P')$ may be bounded
below by some absolute, positive value, it follows that the number
of such $n$ must be bounded uniformly.

To show that $n$ itself must be bounded, note that if $P\in kE(\QQ(t))$ for
some integer $k\geq 2$ then the existence of a primitive divisor may be
used exactly as in the proof in the rational case -- the boundedness
of $n$ follows using Lemma~\ref{updlemma}. In the case of a cyclic
isogeny, the structure of the proof follows the rational case. The
differences are: Lang's conjecture is known to be true; a
uniform bound for the degree of the isogeny follows from the
existence of the same in the rational case since, for any isogeny over
of elliptic curves over $\QQ(t)$, some specialization will be an
isogeny of elliptic curves over $\QQ$; and Lemma~\ref{updlemma}
implies a quantitative Siegel Theorem in which the index $n$ is
bounded.
\end{proof}

That $n$ itself may be bounded in the above argument is already suggested by
Proposition~8.2 of~\cite{hindrysilverman}.  In Section~\ref{FFcase}
below we will work out some explicit bounds, in particular the
explicit bound given in Theorem~\ref{mainQt}.

\section{The Function Field Case}\label{FFcase}

\subsection{Explicit Bounds for $\QQ(t)$.}\label{explicit}

Over~$\QQ(t)$, Theorem~\ref{mainQt} raises the possibility of
finding all the irreducible terms in particular cases. The bound
of $\qtbound$ comes from applying the general version of Lang's
conjecture \cite{hindrysilverman}, assuming also the maximal
possible degree for a cyclic isogeny. As it stands, this is
useless: checking the terms coming from smaller indices involves
testing rational polynomials for irreducibility which have degrees
well in excess of one billion. In particular cases, the problem is
circumvented by obtaining a more nuanced version of Lang's
conjecture (which takes account of the reduction of small
multiples of~$P$, see Theorem~\ref{Lang-conjecture}) together with
Theorem~\ref{upc}, which highlights the dependence upon the degree
of the isogeny. This technique allows us to exhibit examples of
elliptic divisibility sequences where all of the prime (=
irreducible) terms can be computed.

\begin{example}\label{joes} Consider $$E:y^2=x^3+t^2(1-t^2)x \mbox{ with }
P=[t^2,t^2].$$
Then $B_{n}/B_1=B_n$ fails to be a prime power for all $n\ge 3$. Note that
the point~$P$ is magnified by a 2-isogeny as detailed in
\cite[Chapter 14]{cassels}.
\end{example}

On the other hand, it is worth probing the deeper difference between the cases
when $K=\QQ$ and $K=\QQ(t)$ which is probably at
work in general. It is easy to prove that the
polynomial $(t^n-1)/(t-1)$ is irreducible for every prime index~$n$.  An
analogous statement seems to be true for some elliptic divisibility sequences
over function fields. The two curves in Example~\ref{twocurves} appeared in~\cite{hindrysilverman}
as examples where the global canonical height of a point is small:

\begin{example}\label{twocurves}
The point $P=[0,0]$ on the curve
$$E:y^2+xy+(t+1)t^2y=x^3+t^2x^2$$
%(which is \emph{not} in minimal form)
produces terms $B_{nP}$ which are irreducible for all primes~$n$ from
$5$ to $199$.
\smallskip
The point $P=[0,0]$ on the curve
$$E:y^2+xy+(t+1)t^3y=x^3+t^3x^2$$
%(which is \emph{not} in minimal
%form)
produces terms $B_{nP}$ which are irreducible for all primes~$n$ from
$5$ to $199$.
\end{example}

More surprisingly, there seem to be examples where irreducibility occurs
along prime indices in a fixed residue class.
\begin{example} The point $P=[1-t,1-t]$ on the curve $$E: y^2=x^3+t(1-t)x$$ produces
irreducible terms
$B_{nP}$ whenever $n\le 79$ is a prime congruent to $3\pmod 4$.
\end{example}

That $B_{nP}$ is composite for any prime $n\equiv1\pmod 4$ is not surprising, in light of results of
Streng \cite{streng} for elliptic divisibility sequences arising from elliptic curves
(over number fields) with complex multiplication.

\subsection{Lang's conjecture}{\ }

We do {\it not} propose to make explicit the argument given in
section~\ref{both}. Instead, the technique is to work with local
heights and explicit bounds for the difference between the na\"ive
and canonical heights. When $K=\Q(t)$ the infinite place~$\infty$
corresponds to the valuation~$-\deg$. Without loss of generality,
assume that $E$ is given by a short Weierstrass form
\begin{equation}\label{minweq}
y^{2}=x^{3}+Ax+B
\end{equation}
with $A,B\in\Q [t]$,
so that $\Delta_E=-16(4A^{3}+27B^{2})$.
Then the $j$-invariant $j_E$ of
$E$ is given by
\begin{equation}\label{jdef}
j_E=\frac{-1728 (4A)^{3}}{\Delta_E}
\end{equation}
and the height $h(E)$ of the curve $E$ is
$$
h(E)=\frac{1}{12}\max (h(j_E),h(\Delta_E))=
\frac{1}{12}\max (3\deg(A),2\deg (B)) .
$$
%We assume throughout that $E$ is not isomorphic to a curve defined
%over any finite extension of~$\Q$. %%%%%% Is this correct?
For $v$ any place of $K$, denote by $\lambda_v(P)$ the N\'eron
local height of $P$ at $v$, and by $h_v(P)$ the na\"ive local
height
$$
h_{v} (P)=\frac{1}{2}\max \{0,-v(x(P))\}.
$$
%%%%%%%%%%%%%%%%%%%%%%%% Lang's conjecture %%%%%%%%%%%%%%%%%%%%%%%%%%%%

The first lemma compares the heights of the $j$-invariant and
discriminant of $E$.
\begin{lemma}\label{inequality-deg-j-invariant-discriminant}
With $j_E$ as in (\ref{jdef}),
$$
h(j_{E})\le
\frac{3}{2}\deg(\Delta_E).
$$
\end{lemma}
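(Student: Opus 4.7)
The plan is a direct height computation. Writing $j_E = c_4^3/\Delta_E$ with $c_4 = -48A$, the trivial bound $h(p/q) \le \max(\deg p, \deg q)$ for a quotient in $\Q(t)$ yields
\[
h(j_E) \le \max(3\deg A, \deg \Delta_E),
\]
which already settles the lemma whenever $3\deg A \le \deg \Delta_E$: in that case $h(j_E) \le \deg\Delta_E \le \frac{3}{2}\deg\Delta_E$.

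The substantive case is $3\deg A > \deg\Delta_E$. Comparing degrees in $\Delta_E = -16(4A^3 + 27B^2)$ forces $3\deg A = 2\deg B$ together with cancellation of the leading terms of $4A^3$ and $27B^2$: indeed, if $3\deg A > 2\deg B$ then $\deg\Delta_E = 3\deg A$, and if $3\deg A < 2\deg B$ then $\deg\Delta_E = 2\deg B > 3\deg A$, each contradicting the case hypothesis. Writing $\deg A = 2k$, $\deg B = 3k$, what remains is to show $\deg \Delta_E \ge 2 \deg A = 4k$.

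To get at this I would also use the companion identity $j_E - 1728 = c_6^2/\Delta_E$, which comes from $c_4^3 - c_6^2 = 1728\Delta_E$. Since $h(j_E - 1728) = h(j_E)$, the parallel trivial bound gives $h(j_E) \le \max(2\deg B, \deg \Delta_E) = 6k$, matching the bound from the $c_4$ form but not by itself supplying a lower bound on $\deg\Delta_E$. The additional ingredient is the minimality of the Weierstrass model: a significant drop of $\deg\Delta_E$ below $\max(3\deg A, 2\deg B)$ in the cancellation regime should be incompatible with minimality, because one could then use the common zeros of $A$ and $B$ (or the behaviour at infinity) to change variables and simplify the equation further.

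The main obstacle is this last step, the sharp lower bound $\deg\Delta_E \ge 2\deg A$ in the cancellation regime. I would attempt this locally: at each prime $p$ of $\Q[t]$, and at infinity, compare $v_p(A)$, $v_p(B)$, $v_p(\Delta_E)$ and use minimality to bound how much the leading terms of $4A^3$ and $27B^2$ can cancel, then sum the local contributions to obtain the global inequality. A polynomial analogue of Davenport's theorem applied to $4A^3 + 27B^2 = -\Delta_E/16$, treated as an $S$-unit-type identity in $\Q[t]$, looks like the cleanest route, with minimality providing the constraint on $\gcd(A,B)$ needed to push Davenport's estimate from $\deg\Delta_E \ge \frac{1}{2}\deg A + 1$ up to the sharp $\deg\Delta_E \ge 2\deg A$.
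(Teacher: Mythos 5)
Your reduction is correct up to the point you flag as the obstacle: in the cancellation regime you must have $3\deg A = 2\deg B$, and what you need is $\deg\Delta_E \ge 2\deg A$. But you do not prove this, and the route you sketch cannot deliver it. The Mason--Stothers (polynomial $abc$) inequality applied to $4A^3 + 27B^2 + (-(4A^3+27B^2)) = 0$ gives, as you note, only $\deg\Delta_E \ge \tfrac{1}{2}\deg A + 1$, off by a factor of four; and minimality of the short Weierstrass model is a \emph{local} condition at finite primes (for each irreducible $\pi$, not both $\pi^4\mid A$ and $\pi^6\mid B$) which puts no constraint whatsoever on how much the leading terms of $4A^3$ and $27B^2$ cancel at infinity. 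In fact the inequality you are after is false for minimal models. Take $A = -3t^2$, $B = 2t^3+1$, so $y^2 = x^3 - 3t^2 x + (2t^3+1)$ has $\Delta_E = -16(4A^3+27B^2) = -432(4t^3+1)$, which is squarefree, hence the model is minimal. Here $3\deg A = 2\deg B = 6$ and the sextic terms cancel, but $\deg\Delta_E = 3 < 4 = 2\deg A$. Moreover $j_E = -6912\,t^6/(4t^3+1)$ has $h(j_E) = 6 > \tfrac{3}{2}\deg\Delta_E = \tfrac{9}{2}$, so this example appears to violate the lemma itself, not just the intermediate inequality you wanted.

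The paper's own proof is entirely different and does not use minimality: it passes to the splitting field $L$ of $x^3+Ax+B$, uses integrality of the roots $\delta_i$ over $\Q[t]$ to get $v(\delta_i-\delta_j)\ge 0$ at finite places, invokes the product formula together with a claimed constancy of $v(\delta_i-\delta_j)$ over the places of $L$ above $\infty$ to conclude $v(\delta_i-\delta_j)\le 0$ there, and then uses the non-archimedean fact that at most one of $v(\delta_1-\delta_2)$, $v(2\delta_1+\delta_2)$, $v(\delta_1+2\delta_2)$ can differ from $\min(v(\delta_1),v(\delta_2))$ to deduce $\deg B^2 \le \tfrac{3}{2}\deg\Delta_E$ and hence $\deg A^3 \le \max(\deg B^2,\deg\Delta_E)\le\tfrac32\deg\Delta_E$. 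You should be aware that the example above also tests that chain: with $s=1/t$, the cubic $x^3+Ax+B$ has one root $\delta_3\approx -2/s$ in $\Q((s))$ and two roots $\delta_1,\delta_2\approx 1/s$ whose difference $\delta_1-\delta_2$ has valuation $+\tfrac12$ at the ramified place over $\infty$, so the paper's assertion $v(\delta_i-\delta_j)\le 0$ does not hold there either. The upshot is that the gap you left is not one that your sketched tools can fill, and the target statement itself deserves re-examination.
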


\begin{proof}
Let $L$ be the smallest Galois extension of $\Q (t)$ containing
the roots $\delta_{1}$, $\delta_{2}$ and
$\delta_{3}=-\delta_{1}-\delta_{2}$ of $x^{3}+Ax+B$. Let $v$ be a
place of $L$ above the place at infinity. Since $x^{3}+Ax+B$ is
monic with polynomial coefficients,
$v_{\mathcal{P}}(\delta_{i})\ge 0$, for every finite place
$\mathcal{P}$ of $L$, so that
$v_{\mathcal{P}}(\delta_{i}-\delta_{j})\ge 0$. Summing over all
finite places of $L$, and using the fact that
$\delta_{i}-\delta_{j}$ has the same valuation at all places of
the Galois extension $L$ above the infinite place, it follows that
$v(\delta_{i}-\delta_{j})\le 0$.

Since $v$ is non-archimedean, at most one of the valuations $v
(\delta_{1}-\delta_{2})$, $v (2\delta_{1}+\delta_{2})$ and $v
(\delta_{1}+2\delta_{2})$ can be different from $\min
(v(\delta_{1}),v(\delta_{2}))$. In particular, since $\Delta_E =
((\delta_{1}-\delta_{2})(2\delta_{1}+\delta_{2})(\delta_{1}+2\delta_{2}))^{2}$,
it follows that $v(\Delta_E)\le 4\min (v(\delta_{1}),
v(\delta_{2}))$. Hence
$$
v (B^{2})=2(v(\delta_{1}) +v(\delta_{2}) + v(\delta_{1}+\delta_{2}))
\ge 6 \min (v(\delta_{1}),v(\delta_{2}))\ge \frac{3}{2}v (\Delta_E).
$$
Summing over all places $v$ of $L$ above the place at infinity gives
$$
\deg(B^{2})\le\frac{3}{2}\deg (\Delta_E).
$$
The lemma is now a consequence of the inequality
$$
\deg (A^{3}) \le
\max (\deg (B^{2}),\deg (\Delta_E))\le\frac{3}{2}\deg (\Delta_E )
$$
together with the definition $j_{E}=\frac{-1728
(4A)^{3}}{\Delta_E}$ of the $j$-invariant of $E$.
\end{proof}

The following is a version of Lang's conjecture, which was proved
in the function field case by
Hindry--Silverman~\cite{hindrysilverman}. The explicit constants
obtained here, in certain special cases, use the same basic
methods.

\begin{proposition}\label{Lang-conjecture}
There is a constant $c>0$ such that, for all non-torsion points $P\in E(K)$,
$$
c\deg (\Delta_{E}) \le \widehat{h}(P),
$$
where $c$ can be taken to be the following:
\begin{itemize}
\item in the general case, $c=10^{-9.2}$;
\item if $P$ has everywhere good reduction, $c=\frac{1}{12}$;
\item if $P$ has everywhere good reduction except at infinity,
$c=\frac{1}{16}$;
\item if $E$ does not have split multiplicative reduction (in
particular, if $E$ is isotrivial), $c=\frac{1}{1728}$;
\item if $E$ does not have split multiplicative reduction except at
infinity (in particular, if $E$ has a polynomial $j$-invariant),
$c=\frac{1}{2304}$.
\end{itemize}
\end{proposition}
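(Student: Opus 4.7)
The plan is to decompose the canonical height into N\'eron local heights, $\widehat{h}(P)=\sum_v\widehat{\lambda}_v(P)$, and estimate each term according to the reduction type of $E$ at $v$. At places of good reduction, $\widehat{\lambda}_v(P)$ equals the non-negative na\"ive local height $h_v(P)=\tfrac12\max\{0,-v(x(P))\}$. At places of additive or non-split multiplicative reduction, $\widehat{\lambda}_v(P)$ acquires no negative contribution that grows with $v(\Delta_E)$. Only at places of split multiplicative reduction can $\widehat{\lambda}_v$ be significantly negative: Tate's formula there gives $\widehat{\lambda}_v(P)=\tfrac12 B_2(\alpha_v)\,v(\Delta_E)$ with $B_2(x)=x^2-x+\tfrac16$ and $\alpha_v\in[0,1)$ an invariant of $P$; since $\min B_2=-\tfrac1{12}$, the worst-case contribution is $-\tfrac1{24}v(\Delta_E)$.

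The listed special cases fall out directly from this decomposition. If $E$ has everywhere good reduction, every local contribution is non-negative, and combining with Lemma~\ref{inequality-deg-j-invariant-discriminant} (which relates $h(j_E)$ and $\deg\Delta_E$) yields $c=\tfrac1{12}$; allowing bad reduction at $\infty$ only loosens the estimate to $c=\tfrac1{16}$. If $E$ has no split multiplicative reduction at finite places (in particular if $E$ is isotrivial, since then $j_E$ is constant and $E$ has potentially good reduction), the finite-place negative contributions vanish and one obtains $c=\tfrac1{1728}$; this degrades to $\tfrac1{2304}$ when one allows a split multiplicative place at infinity. The factor $1728$ traces back to the normalization of $j_E$ in~\eqref{jdef}.

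For the general case the raw estimate above gives only $\widehat{h}(P)\ge-\tfrac1{24}\deg(\Delta_E)$, which has the wrong sign, so a descent is required. Following Hindry--Silverman~\cite{hindrysilverman}, one applies the decomposition simultaneously to $P,2P,\ldots,NP$, uses the identity $\sum_{k=1}^N\widehat{h}(kP)=\tfrac{N(N+1)(2N+1)}{6}\widehat{h}(P)$, and exploits that for any fixed $\alpha\in[0,1)$ the average $\tfrac1N\sum_{k=1}^N B_2(\{k\alpha\})$ tends to $\int_0^1 B_2=0$ much faster than the worst-case bound $-\tfrac1{12}$. Thus the negative contributions at split multiplicative places become negligible compared with the cubic growth of the good-reduction contributions, and a pigeonhole argument selecting $N$ against the finite collection of $\alpha_v$'s at bad places isolates a positive fraction of $\widehat{h}(P)$.

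The main obstacle is the \emph{quantitative} bookkeeping of this descent: one must track explicit constants through the $B_2$-averaging, through the comparison of canonical and na\"ive local heights at~$\infty$, and through the optimization in~$N$, in order to arrive at the published value $10^{-9.2}$. The special cases sidestep this difficulty precisely because they eliminate either the split multiplicative places (the $\tfrac1{1728}$-family) or all negative local heights (the $\tfrac1{12}$-family), and admit much sharper direct estimates.
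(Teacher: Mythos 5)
Your overall framework — decompose $\widehat{h}$ into N\'eron local heights and estimate by reduction type — is the right one, and your sketch of the Hindry--Silverman averaging for the general case is a fair description of what the paper simply cites. However, there is a genuine gap in your treatment of the special cases, and it is precisely at the step that produces the number $\deg(\Delta_E)$.

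You write that at places of good reduction $\widehat{\lambda}_v(P)$ \emph{equals} the non-negative na\"ive local height $h_v(P)=\tfrac12\max\{0,-v(x(P))\}$, and conclude for the $c=\tfrac1{12}$ case that ``every local contribution is non-negative.'' That only yields $\widehat{h}(P)\geq0$, which is not the claimed inequality $\widehat{h}(P)\geq\tfrac1{12}\deg(\Delta_E)$. What the paper actually uses (Lang, III Theorem 4.5) is that for the chosen polynomial Weierstrass model and a finite place $v$ at which \emph{$P$} (not $E$) has good reduction,
$$
\lambda_v(P)=\tfrac12\max\{0,-v(x(P))\}+\tfrac1{12}v(\Delta_E)\ \geq\ \tfrac1{12}v(\Delta_E),
$$
and the correction term $\tfrac1{12}v(\Delta_E)$, far from being negligible, is the whole source of the bound: summing over finite places and using $\sum_{v\neq\infty}v(\Delta_E)=\deg(\Delta_E)$ gives $\widehat{h}(P)\geq\tfrac1{12}\deg(\Delta_E)-\tfrac1{24}h(j_E)$ once the $\infty$-place is handled, and then Lemma~\ref{inequality-deg-j-invariant-discriminant} yields $\tfrac1{16}$ (or $\tfrac1{12}$ when the $\infty$-term is also non-negative after passing to an $\infty$-minimal model). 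You also conflate ``$P$ has everywhere good reduction'' with ``$E$ has everywhere good reduction''; in the latter case $\Delta_E$ would be a constant and the inequality would be vacuous, so the distinction matters.

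A second, smaller issue: your explanation of the constant $1728$ as coming from the normalization of $j_E$ is a numerical coincidence ($12^3=1728$), not the actual mechanism. The paper obtains $c=\tfrac1{1728}$ and $c=\tfrac1{2304}$ by invoking the Kodaira--N\'eron theorem: at a place of non-split multiplicative (or additive) reduction, the point $12P$ has good reduction, so the good-reduction bound applies to $12P$, and $\widehat{h}(12P)=144\,\widehat{h}(P)$ turns $\tfrac1{12}$ and $\tfrac1{16}$ into $\tfrac1{144\cdot12}=\tfrac1{1728}$ and $\tfrac1{144\cdot16}=\tfrac1{2304}$. That passage to $12P$ is absent from your proposal and is needed; ``the finite-place negative contributions vanish'' is not by itself a proof that they do.
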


\begin{proof}The first bound is the second remark on Theorem 6.1 in
\cite{hindrysilverman}. Assume now that $P$ has everywhere good reduction except maybe at
infinity. Let $v$ be a valuation different from the valuation at
infinity. The coefficients of the chosen Weierstrass equation are
polynomials. Following~\cite[III Theorem
4.5]{Lang-Diophantine-analysis}, the
N\'eron local height ${\lambda}_{v}$ at $v$ satisfies
$$
{\lambda}_{v}(P)=\frac{1}{2}\max
\{-v(x(P)),0\}+\frac{1}{12}v(\Delta_E)\ge\frac{1}{12}v(\Delta_E),
$$
while the
N\'eron local height ${\lambda}_{\infty}$ at infinity
satisfies
$$
{\lambda}_{\infty}(P)\ge
\frac{1}{2}\max \{0, \deg(X(P))\}+\frac{1}{24}\min \{0, -h(j_E)\}
\ge -\frac{1}{24}h(j_{E}),
$$
for some function $X$ on $E(K)$. Summing over all places gives
$$
\widehat{h}(P)\ge \frac{1}{12}\deg (\Delta_{E}) -\frac{1}{24}h(j_{E}).
$$
In particular, Lemma~\ref{inequality-deg-j-invariant-discriminant} implies
$$
\widehat{h}(P)\ge\left( 1-\frac{1}{4}\right)\frac{1}{12}\deg
(\Delta_{E}) =\frac{1}{16}\deg (\Delta_{E})
$$
and the third bound follows.

Now assume $P$ also has good reduction at infinity. Since the
coefficients in the Weierstrass equation are not integral at
infinity, a transformation is needed to an $\infty$-minimal
Weierstrass equation, with discriminant $\Delta_\infty$. Since the
N\'eron local height does not depend on the choice of Weierstrass
equation,
$$
{\lambda}_{{\infty}}(P)
\ge\frac{1}{12}v_\infty(\Delta_{\infty})\ge 0.
$$
Summing over all places as before,
$$
\widehat h(P)\ge
\frac{1}{12}\displaystyle\sum_{v\neq
{\infty}}v(\Delta_{E})=-\frac{1}{12}v_{\infty}(\Delta_{E})
=\frac{1}{12}\deg
(\Delta_{E})
$$
which gives the second bound.

When $E$ does not have split multiplicative reduction at $v$, the
Kodaira-N\'eron Theorem asserts that $12P$ has good reduction at
$v$. Hence the bounds for $\widehat{h}(P)$ and the functoriality
of the canonical height $\widehat{h}(12P)=144\widehat{h}(P)$ give
the final two bounds.
\end{proof}

\begin{remark} In special cases, one can improve the constants in
Theorem~\ref{Lang-conjecture} by a more careful analysis of the
reduction types at bad primes. Moreover, even when $E$ has split
multiplicative reduction, the Kodaira-N\'eron Theorem gives a good
constant also when there is a small common multiple of all the
valuations of the discriminant $\Delta_{E}$ (for example if it is
squarefree).
\end{remark}

%%%%%%%%%%%%%%%%% Na\"ive and canonical heights %%%%%%%%%%%%%%%%%%%%%%%%%

\subsection{Na\"ive and canonical heights}{\ }

In order to estimate the difference between the na\"ive and
canonical heights of a point, pass to the local heights. The most
difficult of these is the height at infinity, which is addressed
first.

\begin{lemma}
\label{Lemma-Difference-local-height-Neron-local-height-at-infinity}
For $P\in E(K)$,
$$
-\frac56 - h(E) +\frac{1}{24}\min (0,v_{\infty}(j))\le
{\lambda}_{\infty}(P)-h_{\infty}(P) \le h(E) + \frac 56
-\frac{1}{12}\deg (\Delta_E).
$$
\end{lemma}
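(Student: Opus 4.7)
The plan is to compare $\lambda_\infty(P)$ with $h_\infty(P)$ by passing to an $\infty$-minimal Weierstrass model of $E$, invoking the standard N\'eron-height formula on that minimal model, and then translating the inequality back through the admissible change of coordinates.

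First I would fix an admissible change of coordinates $(x,y)\mapsto (u^2 x^{*}+r,\, u^3 y^{*}+u^2 s x^{*}+t)$, with $u\in K_\infty^\times$ and $r,s,t\in K_\infty$, taking the model \eqref{minweq} to a model $E^{*}$ that is $v_\infty$-minimal. The discriminants satisfy $\Delta_E=u^{12}\Delta^{*}$ with $v_\infty(\Delta^{*})\ge 0$, hence
$$v_\infty(u)=\tfrac{1}{12}\bigl(v_\infty(\Delta_E)-v_\infty(\Delta^{*})\bigr)\le \tfrac{1}{12}v_\infty(\Delta_E)=-\tfrac{1}{12}\deg(\Delta_E),$$
while $j_{E^{*}}=j_E$. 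The $v_\infty$-integrality requirements on the transformed Weierstrass coefficients also force $|v_\infty(r)|,|v_\infty(s)|,|v_\infty(t)|$ to be $O(|v_\infty(u)|)$, which via Lemma~\ref{inequality-deg-j-invariant-discriminant} is $O(h(E))$.

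Second, I would apply the standard explicit Néron local-height formula on the $v_\infty$-minimal model $E^{*}$, derived from the Tate uniformization of $E^{*}$ over $K_\infty$:
$$\lambda_\infty(P)=\tfrac12\max\{0,-v_\infty(x^{*}(P))\}+\tfrac{1}{12}v_\infty(\Delta^{*})+\epsilon(P),$$
where $\epsilon(P)\equiv 0$ at good reduction and $\epsilon(P)=-\tfrac12 B_2(\zeta_P)\,v_\infty(q)$ at split multiplicative reduction, with $v_\infty(q)=-v_\infty(j_E)>0$, $\zeta_P\in[0,1]$, and $B_2(\zeta)=\zeta^{2}-\zeta+\tfrac16$ taking values in $[-\tfrac{1}{12},\tfrac16]$. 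This produces
$$\tfrac{1}{24}\min\{0,v_\infty(j_E)\}\le\epsilon(P)\le-\tfrac{1}{12}\min\{0,v_\infty(j_E)\},$$
giving precisely the asymmetric contribution to the bound: the lower bound of the lemma picks up $\tfrac{1}{24}v_\infty(j_E)$ when $v_\infty(j_E)<0$, while the upper bound absorbs $-\tfrac{1}{12}\min\{0,v_\infty(j_E)\}\le \tfrac{1}{12}\cdot 12h(E)$ into its $h(E)$ term.

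Finally, I would translate the naive local heights between the two models. Using $x(P)=u^{2}x^{*}(P)+r$ and the non-archimedean inequality, a short case analysis according as $2v_\infty(u)+v_\infty(x^{*}(P))$ is less than, equal to, or greater than $v_\infty(r)$ yields
$$h_\infty(P)=\tfrac12\max\{0,-v_\infty(x^{*}(P))\}-v_\infty(u)+\delta(P),$$
with $|\delta(P)|\le\tfrac12|v_\infty(r)|+O(1)\le h(E)+O(1)$. Subtracting from the Néron formula and using $\tfrac{1}{12}v_\infty(\Delta^{*})+v_\infty(u)=\tfrac{1}{12}v_\infty(\Delta_E)=-\tfrac{1}{12}\deg(\Delta_E)$, one gets
$$\lambda_\infty(P)-h_\infty(P)=-\tfrac{1}{12}\deg(\Delta_E)+\epsilon(P)-\delta(P),$$
and the claimed two-sided bound follows after absorbing $|\delta(P)|$ into the $h(E)+\tfrac56$ budget on each side and using the bounds on $\epsilon(P)$ above (on the upper side, the $-\tfrac{1}{12}\deg(\Delta_E)$ is kept explicit, while on the lower side it is absorbed into $-h(E)$ since $-\tfrac{1}{12}\deg(\Delta_E)\ge -h(E)$). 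The main obstacle is the bookkeeping in this last step: pinning down the exact constant $\tfrac56$ requires a careful tally of the $\tfrac{1}{12}$/$\tfrac16$ extremes of $B_2$, the rounding error $\delta(P)$ from the case split, and the contributions from the $y$-coordinate change of variables through $s,t$, none of which is conceptually deep but all of which must combine correctly.
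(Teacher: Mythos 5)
Your overall strategy---pass to a $v_\infty$-integral model, invoke the explicit local height bound on that model, then translate back---is the right one and matches the paper's. But there are two concrete gaps that mean the proposal as written does not prove the lemma.

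First, you pick an \emph{arbitrary} admissible change of coordinates $(x,y)\mapsto (u^{2}x^{*}+r,\,u^{3}y^{*}+u^{2}sx^{*}+t)$. This introduces the error term $\delta(P)$, which you estimate only as $O(h(E))+O(1)$ and correctly flag as the bottleneck. That ``$O(1)$'' is precisely where the constant $\tfrac56$ has to come from, and with nonzero translation $r$ there is no reason it \emph{should} come out to $\tfrac56$: the translation can genuinely move the break in $\max\{0,-v_\infty(x^*(P))\}$ in a way that is not bounded by the scaling alone. The paper sidesteps this by using the specific substitution $x=t^{2r}X$, $y=t^{3r}Y$ (pure scaling, no translation), where $r=\lceil h(E)\rceil$, so that $X(P)=t^{-2r}x(P)$ and the naive heights on the two models differ by at most $r$ \emph{exactly}, with no case analysis and no $s,t$ contributions. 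You should either specialize to this model or give a real argument that your $\delta(P)$ is controlled to a precise numeric constant.

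Second, the constant $\tfrac56$ is not an ``$O(1)$'' that falls out of bookkeeping; it is the bound $r-h(E)\le\tfrac56$ for $r=\lceil h(E)\rceil$. A priori one only gets $r<h(E)+1$. The point is that $h(E)=\tfrac1{12}\max(3\deg A,\,2\deg B)$, so $12h(E)$ is always a multiple of $2$ or $3$ and therefore never $\equiv 1\pmod{12}$; the worst case is $12h(E)\equiv 2\pmod{12}$, giving $\lceil h(E)\rceil-h(E)\le \tfrac{10}{12}=\tfrac56$. This observation is missing from your proposal. Finally, as a smaller issue, the bound you quote for $\epsilon(P)$ (with upper bound $-\tfrac1{12}\min\{0,v_\infty(j)\}$) does not match what Lang's Theorem~4.5 gives, namely $\tfrac1{24}\min(0,v_\infty(j))-\tfrac1{12}v_\infty(\Delta^*)\le\epsilon(P)\le 0$; citing Lang directly, as the paper does, avoids having to rederive the $B_2$-formula with the correct normalization.
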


\begin{proof}
The coefficients $A$ and $B$ of the minimal Weierstrass
equation~\eqref{minweq} are not $v_{\infty}$-integral. A Weierstrass
equation of $E$ with
$v_{\infty}$-integral coefficients is given by
$$
Y^{2}=X^{3}+t^{-4r}AX+t^{-6r}
$$
where $r$ is the smallest integer greater than or equal to
$h(E)$. The corresponding
change of variables is given by $x=t^{2r}X$ and $y=t^{3r}Y$ and the
discriminant of this equation is
is $\Delta_{\infty}=t^{-12r}\Delta_E$.

Following \cite[III Theorem 4.5]{Lang-Diophantine-analysis}, the
N\'eron local height ${\lambda}_{\infty}$ satisfies
$$
\frac{1}{24}\min (0,v_{\infty}(j))\le
{\lambda}_{\infty}(P)-\frac{1}{2}\max (0, -
v_{\infty}(X(P))\le
\frac{1}{12}v_{\infty}(\Delta_{\infty})
$$
Since $2\lambda_\infty(P)-r\le \frac 12\max (0 ,-2r
+\deg (x(P)))\le \lambda_\infty(P)$, this can be reformulated in terms
of $x(P)$, $\Delta_E$ and $r$ as
$$
-r+\frac{1}{24}\min (0,v_{\infty}(j))\le
{\lambda}_{\infty}(P)-h_{\infty}(P) \le r
-\frac{1}{12}\deg (\Delta_E).
$$
The integer $r$ is the smallest integer greater than or equal to
$h(E)$, which is at most $h(E)+\frac56$, and the result follow.
\end{proof}

Now the difference between na\"ive and canonical heights can be
bounded.

\begin{proposition}\label{height-canonical_height-function-field-case}
For $P\in E(K)$,
$$
-\frac{3}{16}\deg(\Delta_E)-\frac 56\le\widehat{h}(P) - h(P)\le
\frac18\deg(\Delta_E)+\frac 56
$$
\end{proposition}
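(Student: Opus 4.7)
The plan is to decompose $\widehat h(P) - h(P)$ as a sum of local contributions $\lambda_v(P)-h_v(P)$ over all places $v$ of $K$. All the hard analytic work already sits in Lemma~\ref{inequality-deg-j-invariant-discriminant} and Lemma~\ref{Lemma-Difference-local-height-Neron-local-height-at-infinity}; what remains is to evaluate the finite part explicitly and then combine fractions.

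First I would observe that, for each finite place $v$ of $K$, the Weierstrass equation~\eqref{minweq} has $v$-integral coefficients, so the identity used already in the proof of Proposition~\ref{Lang-conjecture} via III.4.5 of \cite{Lang-Diophantine-analysis},
$$\lambda_v(P)-h_v(P)=\tfrac{1}{12}v(\Delta_E),$$
holds at every finite $v$. Since $\Delta_E\in\Q[t]$, the product formula $\sum_v v(\Delta_E)=0$ gives $\sum_{v\ne\infty}v(\Delta_E)=-v_\infty(\Delta_E)=\deg(\Delta_E)$, and therefore
$$\widehat h(P)-h(P)=\tfrac{1}{12}\deg(\Delta_E)+\bigl(\lambda_\infty(P)-h_\infty(P)\bigr).$$
The problem is thus reduced to applying Lemma~\ref{Lemma-Difference-local-height-Neron-local-height-at-infinity} and bounding $h(E)$ and $v_\infty(j_E)$ in terms of $\deg(\Delta_E)$.

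For the upper bound, adding $\tfrac{1}{12}\deg(\Delta_E)$ to the upper estimate of Lemma~\ref{Lemma-Difference-local-height-Neron-local-height-at-infinity} cancels the $-\tfrac{1}{12}\deg(\Delta_E)$ term, leaving $h(E)+\tfrac{5}{6}$. Since $\Delta_E$ is a polynomial we have $h(\Delta_E)=\deg(\Delta_E)$, so by the definition of $h(E)$ together with Lemma~\ref{inequality-deg-j-invariant-discriminant},
$$12\,h(E)=\max\bigl(h(j_E),\,\deg(\Delta_E)\bigr)\le \tfrac{3}{2}\deg(\Delta_E),$$
i.e.\ $h(E)\le \tfrac{1}{8}\deg(\Delta_E)$, yielding the advertised upper bound. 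For the lower bound, the same substitution gives
$$\widehat h(P)-h(P)\ge \tfrac{1}{12}\deg(\Delta_E)-\tfrac{5}{6}-h(E)+\tfrac{1}{24}\min\bigl(0,v_\infty(j_E)\bigr).$$
The negative term at infinity is handled by the elementary estimate $v_\infty(f)\ge -h(f)$, valid for any nonzero $f\in K$ (write $f=p/q$ with $\gcd(p,q)=1$, so that $v_\infty(f)=\deg q-\deg p$ while $h(f)=\max(\deg p,\deg q)$). Applied to $f=j_E$ and combined with Lemma~\ref{inequality-deg-j-invariant-discriminant}, this gives $\tfrac{1}{24}\min(0,v_\infty(j_E))\ge -\tfrac{1}{16}\deg(\Delta_E)$; together with $h(E)\le\tfrac{1}{8}\deg(\Delta_E)$, the fractions add to produce a lower bound at least as strong as $-\tfrac{3}{16}\deg(\Delta_E)-\tfrac{5}{6}$.

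There is no serious obstacle: the conceptual content is entirely carried by the two preceding lemmas, and the only real risk is in the arithmetic of assembling $\tfrac{1}{12}-\tfrac{1}{8}-\tfrac{1}{16}$ (and the cancellation $\tfrac{1}{12}-\tfrac{1}{12}$ in the upper-bound calculation) without error.
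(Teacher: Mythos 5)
Your decomposition into local contributions is the same one the paper uses, and the final assembly of fractions is correct \emph{given} the inputs you feed it, but there is a genuine gap at the first step: the equality
$$\lambda_v(P)-h_v(P)=\tfrac{1}{12}v(\Delta_E)$$
does \emph{not} hold at every finite place $v$. In the proof of Proposition~\ref{Lang-conjecture} that identity is derived under the standing hypothesis that $P$ has good reduction at $v$; you have silently dropped that hypothesis. What Lang's III~Theorem~4.5 gives for a general point at a $v$-integral Weierstrass model is only the two-sided inequality
$$\tfrac{1}{24}\min(0,v(j_E))\le\lambda_v(P)-h_v(P)\le\tfrac{1}{12}v(\Delta_E),$$
and at places of bad reduction the left-hand side can be strictly negative while the right-hand side is strictly positive, so the gap is not harmless in general.

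The consequence is that your ``exact'' finite-place sum $\sum_{v\neq\infty}(\lambda_v-h_v)=\tfrac1{12}\deg(\Delta_E)$ is really only an \emph{upper} bound for that sum, so your upper-bound computation survives unchanged, but your lower-bound computation is spurious: the $+\tfrac{1}{12}\deg(\Delta_E)$ you carry into the lower estimate should not be there, and that is precisely the source of the (incorrect) improvement to $-\tfrac{5}{48}\deg(\Delta_E)$. The repair is exactly what the paper does: bound the finite sum below by $\tfrac{1}{24}\sum_{v\neq\infty}\min(0,v(j_E))$, combine it with the $\tfrac1{24}\min(0,v_\infty(j_E))$ term from Lemma~\ref{Lemma-Difference-local-height-Neron-local-height-at-infinity} into $\tfrac1{24}\sum_{v}\min(0,v(j_E))=-\tfrac1{24}h(j_E)$ (the identity $\sum_v\max(0,-v(f))=h(f)$ for $f\in K^\times$ subsumes your elementary estimate $v_\infty(f)\ge-h(f)$), and then apply $h(j_E)\le\tfrac32\deg(\Delta_E)$ and $h(E)\le\tfrac18\deg(\Delta_E)$ to land exactly on $-\tfrac{3}{16}\deg(\Delta_E)-\tfrac56$, with no slack. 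Once this correction is made, your argument coincides with the paper's.
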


\begin{proof}
Following \cite[III Theorem 4.5]{Lang-Diophantine-analysis}, the
N\'eron local height ${\lambda}_{v}$ at any finite place $v$
satisfies
$$
\frac{1}{24}\min (0,v(j))\le
{\lambda}_{v}(P)-h_{v}(P)\le \frac{1}{12}v(\Delta_E).
$$
Together with
Lemma~\ref{Lemma-Difference-local-height-Neron-local-height-at-infinity},
and summing over all places, this gives
$$
-\frac 56 - h(E) -\frac{1}{24}h(j)\le
\widehat{h}(P)-h(P)\le h(E)+\frac 56.
$$
To conclude we notice that $h(j)\le 12h(E)$ and use that $h(E)\le
\frac 18\deg(\Delta_E)$ from
Lemma~\ref{inequality-deg-j-invariant-discriminant}.
\end{proof}

This subsection concludes with an estimate on the degree of the
denominator of $x(P)$, in terms of the (na\"ive) height of $P$.
This, in particular the fact that the constant $\frac 34$ is large
enough, is crucial to the explicit bounds later.

\begin{lemma}\label{compareheight} For $P\in E(K)$,
$$
\frac34h(P)-\frac3{16}\deg(\Delta_E) \le\deg(B_P).
$$
\end{lemma}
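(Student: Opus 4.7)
The plan is to decompose the na\"ive height into local contributions and reduce the claim to a bound on the local height at the infinite place. Since $\sum_{v \ne \infty} h_v(P) = \deg(B_P)$ and $h_\infty(P) = \frac{1}{2}\max\{0, \deg(A_P) - 2\deg(B_P)\}$, we have $h(P) = \deg(B_P) + h_\infty(P)$. The claimed inequality is then equivalent to $h_\infty(P) \le \frac{1}{3}\deg(B_P) + \frac{1}{4}\deg(\Delta_E)$, which is automatic in the case $\deg(A_P) \le 2\deg(B_P)$ (where $h_\infty(P) = 0$). I focus on the nontrivial case $\deg(A_P) > 2\deg(B_P)$, in which the claim reduces to $\deg(A_P) \le \frac{8}{3}\deg(B_P) + \frac{1}{2}\deg(\Delta_E)$.

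The first step is to dispose of the sub-case $\deg(A_P) - 2\deg(B_P) \le 2\lceil h(E)\rceil$: here $h_\infty(P) \le h(E) + 1$, so the conclusion follows from the bound $h(E) \le \frac{1}{8}\deg(\Delta_E)$ established in the proof of Lemma~\ref{inequality-deg-j-invariant-discriminant}. In the complementary sub-case where $\deg(A_P) - 2\deg(B_P) > 2r$ with $r = \lceil h(E)\rceil$, I would pass to the $\infty$-integral Weierstrass model via the change of variables $x = t^{2r}X$, $y = t^{3r}Y$ used in the proof of Lemma~\ref{Lemma-Difference-local-height-Neron-local-height-at-infinity}. The point $P$ then lies in the kernel of reduction at $\infty$, and an explicit calculation yields $\lambda_\infty(P) = h_\infty(P) - \frac{1}{12}\deg(\Delta_E)$, with modifications at bad reduction absorbed into the bounds of Lemma~\ref{Lemma-Difference-local-height-Neron-local-height-at-infinity}.

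Summing this with the finite-place upper bound $\sum_{v\ne\infty}\lambda_v(P) \le \deg(B_P) + \frac{1}{12}\deg(\Delta_E)$, coming from $\lambda_v(P) - h_v(P) \le \frac{1}{12}v(\Delta_E)\deg(\pi)$ at each finite place, shows that the two $\frac{1}{12}\deg(\Delta_E)$ corrections cancel. Combining with the lower bound $\widehat{h}(P) - h(P) \ge -\frac{3}{16}\deg(\Delta_E) - \frac{5}{6}$ from Proposition~\ref{height-canonical_height-function-field-case}, together with a degree analysis of the Weierstrass equation $C_P^2 = A_P^3 + A A_P B_P^4 + B B_P^6$ at infinity (which in this regime forces $2\deg(C_P) = 3\deg(A_P)$), yields the desired bound.

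The main obstacle is extracting the precise constant $\frac{8}{3}$ in the key inequality $\deg(A_P) \le \frac{8}{3}\deg(B_P) + \frac{1}{2}\deg(\Delta_E)$. This constant reflects the cubic structure of the Weierstrass equation, encoded in the forced relation $\deg(C_P) = \frac{3}{2}\deg(A_P)$ when $\deg(A_P)$ dominates, paired with the factor $\frac{1}{2}$ in the definition of $h_\infty$; the bookkeeping of the additive error terms then has to slot together the constants from Lemma~\ref{inequality-deg-j-invariant-discriminant}, Lemma~\ref{Lemma-Difference-local-height-Neron-local-height-at-infinity}, and Proposition~\ref{height-canonical_height-function-field-case}.
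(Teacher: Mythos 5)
Your reduction of the lemma to the inequality $\deg(A_P) \le \frac{8}{3}\deg(B_P) + \frac{1}{2}\deg(\Delta_E)$ (in the nontrivial case $\deg A_P > 2\deg B_P$) is a correct reformulation, and matches what the paper ultimately establishes. However, the argument you sketch for that inequality has a genuine gap, and the tools you propose cannot close it.

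The key problem is that the height-comparison machinery you invoke (Lemma~\ref{Lemma-Difference-local-height-Neron-local-height-at-infinity}, Proposition~\ref{height-canonical_height-function-field-case}) compares $\h(P)$ to $h(P) = \deg(B_P) + h_\infty(P)$; it says nothing about how $h_\infty(P)$ and $\deg(B_P)$ split between themselves. Tracing your local computations through simply reproduces $\h(P) \le h(P) + O(\deg\Delta_E)$, which is content-free for the present purpose. The ``degree analysis of the Weierstrass equation'' you gesture at gives $2\deg(C_P) = 3\deg(A_P)$ in the regime where $\deg A_P$ dominates, but this relates $C_P$ to $A_P$ and still leaves $\deg(B_P)$ entirely unconstrained; the Weierstrass relation $C_P^2 = A_P^3 + AA_PB_P^4 + BB_P^6$ is consistent with $B_P$ being a unit. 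What is needed --- and what the paper actually supplies --- is a genuine Diophantine-approximation input: the factorization $C_i^2 = A_P - \delta_iB_P^2$ over the algebraic closure of $K_\infty$, together with Siegel's relations among $C_i\pm C_j$, yields (via a $\gcd$/$\lcm$ contradiction) the forced inequality $v(B_{1j+}) \ge \frac{2}{3}v(B_P) + \frac{1}{2}v(\delta_j - \delta_1)$, which is precisely where the $\frac{4}{3}$ (hence the $\frac{3}{4}$ and $\frac{8}{3}$) come from. This step has no analogue in your proposal. A secondary but real issue: Lemma~\ref{Lemma-Difference-local-height-Neron-local-height-at-infinity} and Proposition~\ref{height-canonical_height-function-field-case} carry $\pm\frac{5}{6}$ additive errors, so any argument built on them can only produce a bound of the form $\frac{3}{4}h(P) - \frac{3}{16}\deg(\Delta_E) - O(1) \le \deg(B_P)$, whereas the lemma is stated with no additive constant; the paper's direct local argument at $v_\infty$ is exact.
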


\begin{proof}
This uses a standard approach to Siegel's Theorem. Assume that
$h(P)=\frac12\deg(A_P)$ (or we are done).
Denote by $K_\infty$ the completion of $K$ at the infinite valuation,
and by $L$ the algebraic closure of $K_\infty$. Let $v$ be the
valuation on $L$ which extends the valuation $-\deg$ on $K_\infty$.
Inserting the coordinates of $P$ into the equation for $E$ gives
$$
C_P^2=(A_P-\delta_1B_P^2)(A_P-\delta_2B_P^2)(A_P-\delta_3B_P^2).
$$
This may be factored in $L$ as
$$
C_i^2=A_P-\delta_iB_P^2,
$$
because the factors of~$\Delta_E$ are squares in~$L$ (so they may
be absorbed into the~$C_i,i=1,2,3$).

As in the proof of
Lemma~\ref{inequality-deg-j-invariant-discriminant},
$v(\Delta_E)\le \frac 14v(\delta_i)$, for each $i$. If $v(A_P)\ge
v(\delta_i B_P^2)$ then we have $v (A_{P})\ge 2 v(B_{P}) +
\frac{1}{4}v(\Delta_E )$ and nothing further is needed. So
consider the case when $v(A_P)< v(\delta_i B_P^2)$ for $i=1,2,3$,
or more precisely the case when
$$
v(A_P)=2v(C_i), \quad i=1,2,3.
$$
For $i\neq j$,
$$
(\delta_j-\delta_i)B_P^2=C_i^2-C_j^2=(C_i+C_j)(C_i-C_j)
$$
therefore
$$
B_{ij+}^2=C_i + C_j \mbox{ and }B_{ij-}^2=C_i - C_j,
$$
Without loss of generality, assume $ v(B_{ij+})\le v(B_{ij-}). $

We assert that (at least) one of the following two inequalities holds:
\begin{eqnarray}
v(B_{12+})&\ge& \frac23v(B_P)+\frac12v(\delta_2-\delta_1)
\quad\mbox{ or } \notag\\
v(B_{13+})&\ge& \frac23v(B_P)+\frac12v(\delta_3-\delta_1).
\label{ineq-12+13+notboth}
\end{eqnarray}
To prove this assume that
\begin{eqnarray}
v(B_{12+})&<&\frac23v(B_P)+\frac{1}{2}v(\delta_2-\delta_1)
\quad \mbox{ and }\notag\\
v(B_{13+})&<&\frac23v(B_P)+\frac{1}{2}v(\delta_3-\delta_1).\label{both}
\end{eqnarray}
This forces
$$
v(B_{12-})>\frac13v(B_P)\quad \mbox{ and }\quad
v(B_{13-})>\frac13v(B_P).
$$
Then Siegel's relation $(C_1-C_2)+(C_2-C_3)=C_1-C_3$ forces
$$
v(B_{23-})>\frac13v(B_P).
$$
Since $\lcm(B_{12+}^{2},B_{13+}^{2})$ divides
$B_P^{2}(\delta_2-\delta_1)(\delta_3-\delta_1)$, the
inequalities~\eqref{both} imply
\begin{eqnarray*}
v(\gcd(B_{12+},B_{13+}))
&=&v(B_{12+})+v(B_{13+}) -v(\lcm(B_{12+},B_{13+})) \\
&<&\frac13v(B_P).
\end{eqnarray*}
Now Siegel's relation $(C_1+C_2)-(C_1+C_3)=C_2-C_3$ shows that
$\gcd(B_{12+},B_{13+})$ divides $B_{23-}$. In particular,
$$
\frac13v(B_P) < v(B_{23-})\le \gcd(B_{12+},B_{13+}) < \frac13v(B_P),
$$
which is absurd.

Hence one of the inequalities~\eqref{ineq-12+13+notboth} holds, say
the first. Since
$$v(B_{12-})\ge v(B_{12+}) \mbox{ and } 2C_1=B_{12+}^2+B_{12-}^2,$$
it follows that
$$
\frac12v(A_P)=v(C_1)\ge \frac43 v(B_P)+v(\delta_2-\delta_1)\ge
\frac43v(B_P)+\frac14 v(\Delta_E).
$$
To conclude, recall that $v$ extends $-\deg$.
\end{proof}

%%%%%%%%%%%%%%%%%%%%%% Primitive divisors %%%%%%%%%%%%%%%%%%%%%%%%%%%%%

\subsection{Primitive divisors}{\ }

\begin{theorem}\label{upd} Let $P\in E(K)$ be a non-torsion point and
let $B=(B_n)$ be the elliptic divisibility
sequence arising from $P$. There
is a uniform constant~$N_0$ such that for all $n\ge N_0$, $B_n$ has a
primitive divisor.
\begin{itemize}
\item In general, $N_0=190000$.
\item If~$P$ has everywhere good reduction, $N_0=7$.
\item If~$P$ has everywhere good reduction except at infinity, $N_0=11$.
\item If~$E$ is isotrivial, $N_0=133$.
\item If~$E$ has a polynomial $j$-invariant, $N_0=151.$
\end{itemize}
\end{theorem}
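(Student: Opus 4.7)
The crucial observation is that over $K=\Q(t)$ every residue field has characteristic zero, so $v(n)=0$ for every nonzero $n\in\Z$. Applying Lemma~\ref{easy-factorization} to the multiplication-by-$n$ endomorphism of the minimal curve $E$ therefore yields, for each place $v$ of $K$,
\begin{equation*}
v(B_{mP})=\begin{cases}v(B_{n_vP})&\text{if }n_v\mid m,\\ 0&\text{otherwise,}\end{cases}
\end{equation*}
where $n_v:=\min\{m>0:v(B_{mP})>0\}$ is the rank of apparition of~$v$; the set $\{m:v(B_{mP})>0\}$ is forced to equal $n_v\Z$ because the kernel of reduction at~$v$ is a subgroup of $E(K_v)$.

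I would then argue that, if $B_n$ has no primitive divisor, every $v\mid B_{nP}$ satisfies $n_v\mid n$ and $n_v<n$, so grouping contributions by apparition rank gives
\begin{equation*}
\deg B_n\le\sum_{d\mid n,\ d<n}\deg B_d.
\end{equation*}
Substituting Lemma~\ref{compareheight}'s lower bound $\deg B_n\ge\tfrac34 h(nP)-\tfrac3{16}\deg\Delta_E$ on the left, the trivial $\deg B_d\le h(dP)$ on the right, and converting between $h$ and $\widehat h$ via Proposition~\ref{height-canonical_height-function-field-case} together with $\widehat h(kP)=k^2\widehat h(P)$, gives an inequality of the shape
\begin{equation*}
\Bigl(\tfrac74 n^2-\sum_{d\mid n}d^2\Bigr)\widehat h(P)\le C\,\tau(n)\deg\Delta_E+C'\tau(n),
\end{equation*}
with $C,C'$ absolute. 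Since $\sum_{d\mid n}d^2\le\zeta(2)n^2$, the coefficient on the left exceeds $\tfrac1{10}n^2$, and the bound $\widehat h(P)\ge c\deg\Delta_E$ from Proposition~\ref{Lang-conjecture} reduces the problem to an inequality $cn^2\ll\tau(n)$, which bounds $n$ explicitly. Running this separately with each of the five values $c\in\{10^{-9.2},\tfrac1{12},\tfrac1{16},\tfrac1{1728},\tfrac1{2304}\}$ from Proposition~\ref{Lang-conjecture} will yield the five corresponding bounds $N_0$.

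The hard part will be the general case: with $c=10^{-9.2}$ the leading $cn^2$ is very sensitive to the size of $\tau(n)$ for highly composite $n$ near $190000$, and the simple inequality above plausibly fails to give the claimed bound without further sharpening. I expect the right refinement to be M\"obius inversion: the ``new-contribution'' function $f(n):=\sum_{d\mid n}\mu(n/d)\deg B_d$ is nonnegative and vanishes exactly when $B_n$ has no primitive divisor, so that vanishing rewrites
\begin{equation*}
\deg B_n=\sum_{\substack{d\mid n,\ d<n\\n/d\ \text{squarefree}}}\bigl(-\mu(n/d)\bigr)\,\deg B_d,
\end{equation*}
a sum of only $2^{\omega(n)}-1$ terms. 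Applying upper bounds to the terms with $-\mu(n/d)=+1$ and lower bounds to those with $-\mu(n/d)=-1$ forces the leading coefficient of $n^2\widehat h(P)$ on the right to be at most $\tfrac12\bigl(\prod_{p\mid n}(1+p^{-2})-\prod_{p\mid n}(1-p^{-2})\bigr)\le\tfrac12\bigl(\zeta(2)/\zeta(4)-1/\zeta(2)\bigr)=9/(2\pi^2)\approx 0.46$, comfortably below the $\tfrac34$ on the left; a careful accounting of the error terms, combined with the case-by-case values of~$c$, should then pin down the numerical bounds $N_0\in\{190000,7,11,133,151\}$.
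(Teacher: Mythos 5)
Your opening observations are exactly right and match the paper's: over $\Q(t)$ every residue characteristic is zero, so Lemma~\ref{easy-factorization} gives equality $v(B_{mP})=v(B_{n_vP})$ whenever $n_v\mid m$, and the set of such $m$ is a subgroup of $\Z$. But the inequality you then write down, $\deg B_n\le\sum_{d\mid n,\ d<n}\deg B_d$, gives away too much. The sharper observation the paper uses is that if $B_n$ has no primitive divisor, then each $v\mid B_n$ has $n_v$ a \emph{proper} divisor of $n$, hence $n_v$ divides $n/p$ for some prime $p\mid n$, and $v(B_n)=v(B_{n/p})$. This yields
\begin{equation*}
\deg B_n\ \le\ \sum_{p\mid n,\ p\text{ prime}}\deg B_{n/p},
\end{equation*}
a sum of only $\omega(n)$ terms with leading coefficient $\sum_{p\mid n}p^{-2}<0.453$. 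After applying Lemma~\ref{compareheight}, Proposition~\ref{height-canonical_height-function-field-case} and Proposition~\ref{Lang-conjecture} (Lang's conjecture), this gives $cn^2\bigl(\tfrac34-\sum_{p\mid n}p^{-2}\bigr)\le\tfrac{87+98\omega(n)}{96}$, and the left factor exceeds $0.297$, which is what pins down $N_0=190000$ when $c=10^{-9.2}$ and $\omega(n)\le 6$.

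Your two alternatives both fall short of this numerically. The all-proper-divisor inequality has leading coefficient $\tfrac34-(\zeta(2)-1)\approx0.105$ and a $\tau(n)$-term error, roughly a factor $45$ worse than the paper's, so it would yield $N_0$ on the order of a million, not $190000$. Your M\"obius-inversion refinement is correct as far as it goes (over $\Q(t)$ the ``primitive part'' $D_n$ with $B_n=\prod_{d\mid n}D_d$ really does exist, so $\deg D_n=\sum_{d\mid n}\mu(n/d)\deg B_d\ge0$), and its leading coefficient $\approx0.456$ is comparable to the paper's $\approx0.452$; but the error term now involves $2^{\omega(n)}$ (up to $64$ for $n<510510$) rather than $\omega(n)$ (up to $6$), which inflates $N_0$ by roughly a factor $\sqrt{10}$. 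So in the general case with $c=10^{-9.2}$ neither of your inequalities would ``pin down'' $190000$; the missing step is precisely the reduction to prime divisors $p\mid n$, which is both simpler and sharper than the M\"obius identity. Once you have it, the rest of your plan (Lemma~\ref{compareheight}, Proposition~\ref{height-canonical_height-function-field-case}, the five values of $c$) is the paper's.
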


\begin{proof} Suppose the term $B_n$ does not have a primitive divisor. Given
any irreducible factor $f$, there is a $d<n$ with $f|B_d$. Since
$f$ also divides $B_{\gcd(n,d)}$ we may assume $d$ is actually a
divisor of $n$. What is more, the valuation of $B_n$ and $B_d$
must be the same. It follows that if $B_n$ does not have a
primitive divisor then $B_n$ divides
$\displaystyle\prod_{p|n}B_{\frac{n}{p}}$; in particular,
$$
\deg(B_n) \le \sum_{p|n}\deg(B_{\frac{n}{p}}).
$$
Now apply Lemma~\ref{compareheight} to obtain
$$
\frac34h(nP)-\frac3{16}\deg(\Delta_E) \le
\sum_{p|n}h\left(\frac{n}{p}P\right) .
$$
Proposition~\ref{height-canonical_height-function-field-case},
with the functoriality of the canonical height, gives
\begin{eqnarray*}
&&\frac{3}{4}\left(hn^2-\frac{1}{8}\deg(\Delta_E) -\frac
56\right)-\frac{3}{16}\deg(\Delta_E) \\ &&\qquad\qquad\qquad
\le hn^2\left(\displaystyle\sum_{p|n}\frac{1}{p^2}\right)
+\frac{3}{16}\omega(n)\deg(\Delta_E) + \frac 56\omega(n),\\
\end{eqnarray*}
where $\omega (n)$ denotes the number of prime factors of $n$.
Re-arranging, dividing by $\deg(\Delta_E)\ge 1$, and applying Lang's
conjecture (Proposition~\ref{Lang-conjecture}, with $c$ equal to
the constant from
there) gives
$$
cn^2\left(\frac34-\sum_{p|n}\frac{1}{p^2}\right)\le
\frac{87+98\omega(n)}{96}.
$$
Using $\frac34-\sum_{p|n}\frac{1}{p^2}\ge 0.297$ and $\omega(n)\le
\log(n)/\log(2)$, firstly bounds~$n$ roughly. Then a more careful
analysis of small values of $n$ yields the bounds in the Theorem.
\end{proof}

%%%%%%%%%%%%%%%%%%%%%%%%% Prime powers %%%%%%%%%%%%%%%%%%%%%%%%%%%%%%%

\subsection{Prime powers}{\ }

Suppose now that $E'$ is another elliptic curve over $K$, defined
relative to a minimal short Weierstrass equation with discriminant
$\Delta_{E'}$, and that $\sigma:E'\to E$ is an isogeny of degree $m>1$.
The following is a direct consequence of Lemma~\ref{easy-factorization}.
\begin{lemma}\label{easy}
Suppose $P'\in E'(K)$ and $P=\sigma(P')$. Then
\begin{equation}\label{demo}
B_P=B_{P'}\widetilde B_P,
\end{equation}
with $\widetilde B_P$ coprime to $B_{P'}$.
\end{lemma}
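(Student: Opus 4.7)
The plan is to deduce both assertions directly from Lemma~\ref{easy-factorization} by examining valuations place by place. At every finite place $v$ of $K$, the unconditional inequality $v(B_{P'}) \leq v(B_P)$ tells me that $B_{P'}$ divides $B_P$ in $\mathcal{O}_K$, so I may set $\widetilde{B}_P := B_P/B_{P'} \in \mathcal{O}_K$, which is the desired factorization~\eqref{demo}.

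For the coprimality of $\widetilde{B}_P$ and $B_{P'}$, take any finite place $v$ with $v(B_{P'}) > 0$. Since $E'$ is in minimal form (the standing assumption of this section), the second half of Lemma~\ref{easy-factorization} applies and gives $v(B_P) \leq v(B_{P'}) + v(m)$. In the function field setting $\mathcal{O}_K = \QQ[t]$, the positive integer $m = \deg(\sigma)$ is a unit, so $v(m) = 0$ at every finite place; combining the two bounds yields $v(B_P) = v(B_{P'})$, whence $v(\widetilde{B}_P) = 0$. This is exactly the coprimality statement.

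There is no genuine obstacle here: the argument is purely a place-by-place bookkeeping exercise on the inequalities already established in Lemma~\ref{easy-factorization}. The only substantive ingredient is the observation that $v(m) = 0$ at every finite place of $\QQ(t)$, which is what makes the coprimality assertion clean in the function field setting (in contrast with the $K = \QQ$ case, where primes dividing $m$ could in principle appear in both factors).
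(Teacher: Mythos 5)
Your proof is correct and matches the paper's approach: the paper simply asserts the lemma is ``a direct consequence of Lemma~\ref{easy-factorization}'' without elaboration, and you have filled in exactly the intended place-by-place bookkeeping. The key observation that $v(m)=0$ at every finite place of $\QQ(t)$ (since $m\in\ZZ_{>0}$ is a unit in $\QQ[t]$) is indeed the point that makes the coprimality clean in the function field case, and you have correctly used both halves of Lemma~\ref{easy-factorization}, noting that both $E$ and $E'$ are taken minimal in this section.
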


We already know how to deal with isogenies of the form $[k]$ for
some integer~$k$, using Theorem~\ref{upd}. From now on, we work
only with cyclic isogenies. Note that the degree of such an
isogeny is bounded as in the rational case (by specialization),
$m\le\mazurbound$, by Mazur's famous result~\cite{mazur} (see also
\cite[page 265]{aec}). The explicit dependence upon the degree is
shown in what follows.

\begin{theorem}\label{upc}
Suppose $B=(B_{nP})$ is an elliptic divisibility sequence generated by
a rational point $P$ on a minimal elliptic curve. Assume~$P$ is magnified by
a cyclic isogeny of degree~$m$. There
are uniform constants~$N_0$ and $N_1$ such that for all
$$n\ge \max\{N_0, N_1\sqrt m\},$$
$B_n/B_1$ has at least
two distinct prime factors.
\begin{itemize}
\item In general, $N_0=133034, N_1=325865$.
\item If~$P'$ has everywhere good reduction, $N_0=12, N_1=5$.
\item If~$P'$ has everywhere good reduction except at infinity, $N_0=14,
N_1=6$.
\item If~$E'$ is isotrivial, $N_0=140, N_1=58$.
\item If~$E'$ has a polynomial $j$-invariant, $N_0=161, N_1=67$.
\end{itemize}
%Assuming $B_1=1$, there is a uniform constant~$N_2$ such that
%for all $n\ge N_2$,
%$B_n$ has two distinct prime factors.
%\begin{itemize} \item In general, $N_2=105955$.
%\item If~$P'$ has everywhere good reduction, $N_2=10$.
%\item If~$P'$ has everywhere good reduction except at infinity, $N_2=11$.
%\item If~$E'$ is isotrivial, $N_2=111$.
%\item If~$E'$ has a polynomial $j$-invariant, $N_2=128$.
%\end{itemize}
\end{theorem}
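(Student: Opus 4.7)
The plan is to adapt the function field proof of Theorem~\ref{th:main} with all constants tracked explicitly. Arguing by contradiction, suppose $B_n/B_1=B_{nP}/B_P$ is a prime power for some $n\ge\max\{N_0,N_1\sqrt m\}$. As the isogeny $\sigma$ is cyclic and $h(m)=h(n)=0$ in $K=\QQ(t)$, Lemma~\ref{quasiint} gives one of
\begin{equation*}
h(B_{nP})\le\left(\tfrac1{n^2}+\tfrac1m\right)\widehat h(nP)+O(h(E))
\end{equation*}
or
\begin{equation*}
h(B_{nP'})\le\tfrac{m}{n^2}\widehat h(nP')+O(h(E')).
\end{equation*}
By Lemma~\ref{inequality-deg-j-invariant-discriminant}, $h(E)\le\tfrac18\deg(\Delta_E)$, so each $O(h(E))$ term is a constant multiple of $\deg(\Delta_E)$; by Lemma~\ref{heights}, together with the boundedness of the Szpiro ratio, $\deg(\Delta_E)$ and $\deg(\Delta_{E'})$ agree up to an absolute multiplicative constant.

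Next, bound the left-hand sides from below with Lemma~\ref{compareheight}, $\deg(B_Q)\ge\tfrac34 h(Q)-\tfrac3{16}\deg(\Delta)$, and convert na\"\i ve to canonical heights via Proposition~\ref{height-canonical_height-function-field-case}. Using $\widehat h(nQ)=n^2\widehat h(Q)$, the first case rearranges to
\begin{equation*}
\left(\tfrac34-\tfrac1{n^2}-\tfrac1m\right)n^2\widehat h(P)\le C_1\deg(\Delta_E)+C_2
\end{equation*}
for explicit absolute constants $C_1,C_2$, and the second case yields the analogue with $\tfrac1{n^2}+\tfrac1m$ replaced by $\tfrac{m}{n^2}$ and $(P,E)$ by $(P',E')$.

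Finally, apply Lang's conjecture in the explicit form of Proposition~\ref{Lang-conjecture}: $\deg(\Delta_E)\le\widehat h(P)/c$ in the first case and $\deg(\Delta_{E'})\le\widehat h(P')/c$ in the second, where $c$ depends on the reduction type of $E'$. Dividing through by the canonical height and absorbing the additive term produces the purely numerical inequalities
\begin{equation*}
\left(\tfrac34-\tfrac1{n^2}-\tfrac1m\right)n^2\le\tfrac{C}{c}
\qquad\text{or}\qquad
\left(\tfrac34-\tfrac{m}{n^2}\right)n^2\le\tfrac{C}{c}.
\end{equation*}
Because $\tfrac34-\tfrac1m\ge\tfrac14$ for $m\ge 2$, the first inequality bounds $n$ absolutely by some $N_0$, while the second yields $n\le N_1\sqrt m$ once $n^2>\tfrac43 m$; together these give the theorem.

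The main obstacle will be the honest bookkeeping of the absolute constants hidden in Lemmas~\ref{quasiint}, \ref{heights}, \ref{compareheight} and in Proposition~\ref{height-canonical_height-function-field-case}; the explicit values of $N_0,N_1$ in the theorem follow by combining these with the value of $c$ from Proposition~\ref{Lang-conjecture} appropriate to the reduction type of~$E'$. The general case $c=10^{-9.2}$ accounts for the very large bounds $N_0=133034$, $N_1=325865$, while the larger values $c=\tfrac1{12},\tfrac1{16},\tfrac1{1728},\tfrac1{2304}$ in the specialised cases produce the correspondingly small constants listed.
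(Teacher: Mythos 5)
Your plan follows a slightly different organization from the paper, though the underlying mechanism is the same. The paper deliberately announces at the start of its Section~3.2 that it will \emph{not} make the Siegel-type argument of Section~2 explicit; instead it works directly from the coprime factorization $B_n = B'_n\widetilde{B_n}$ of Lemma~\ref{easy} and shows the two degree inequalities $\deg(B_n/B_1) > \deg(B'_n) > \deg(B_1)$, each of which forces a nontrivial prime from a distinct coprime block. You instead route through Lemma~\ref{quasiint} and argue via quasi-integrality. On inspection these are the same argument in disguise: the two cases of Lemma~\ref{quasiint} are exactly the negations of the paper's two degree inequalities (case (2), $h(B_{nP'})\le h(B_P)$, is $\deg B'_n\le\deg B_1$; case (1), $h(B_{nP})\le h(B_P)+h(B_{nP'})+h(m)+h(n)$, is $\deg(B_n/B_1)\le\deg B'_n$ once $h(m)=h(n)=0$), so both routes reduce to the same numeric inequalities via Lemma~\ref{compareheight}, Proposition~\ref{height-canonical_height-function-field-case}, and Proposition~\ref{Lang-conjecture}. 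The paper's route is cleaner for the arithmetic because it never introduces unquantified $O(\cdot)$ terms that then have to be re-extracted. Two points in your sketch that need care if you were to carry it out: first, the $O(h(E))$ and $O(h(E'))$ in Lemma~\ref{quasiint} must be replaced by the explicit $\frac3{16}\deg\Delta + \frac56$ bounds from Proposition~\ref{height-canonical_height-function-field-case}, and the relation $\widehat h(P)=m\widehat h(P')$ used explicitly rather than appealing to Lemma~\ref{heights} as a black box; second, the comparison between $\deg\Delta_E$ and $\deg\Delta_{E'}$ (needed when one side of the dichotomy is phrased relative to $E$ and the Lang constant is keyed to $E'$, which is where the theorem's hypotheses live) requires an explicit constant, not merely ``agree up to an absolute multiplicative constant''; the paper builds this conversion into \eqref{above} directly.
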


\begin{proof}[\sc Proof of Theorem \ref{mainQt}.]
The proof follows from Theorem \ref{upc} for the cyclic case and Theorem \ref{upd}
if the isogeny is~$[k]$ for some integer~$k$. The worst possible bound comes from
assuming we have a cyclic isogeny of degree~\mazurbound, together with no other special
assumptions.
\end{proof}

\begin{proof}[\sc Proof of Theorem \ref{upc}]
Write $B'=(B'_n)$ for the elliptic divisibility sequence arising from $P'$.
By Lemma~\ref{easy},
$$\frac{B_n}{B_1}.B_1=B_n'\widetilde {B_n}.
$$
If $\deg(B_n/B_1)>\deg(B'_n)>\deg(B_1)$ then
$B_n/B_1$ has two distinct prime factors.
Begin with the first inequality.
To bound $\deg(B_n/B_1)$ below use Lemma~\ref{compareheight}
and Proposition~\ref{height-canonical_height-function-field-case}. This yields
\begin{equation}\label{below}
\deg(B_n)-\deg(B_1)>h\left(\frac{3n^2}{4}-1\right)-\frac{15}{32}\deg(\Delta_E)-\frac{35}{24}.
\end{equation}
To bound $\deg(B'_n)$ above use the same tools. Note that the functoriality of
the canonical height ensures $h=mh'$ with $m\ge 2$. Then
\begin{equation}\label{above}
\deg(B'_n)<h'n^2+\frac{3}{16}\deg(\Delta_{E'})+\frac{5}{6}<\frac{h}{m}n^2+\frac{3}{96}\deg(\Delta_E)+\frac{5}{6}.
\end{equation}
The right hand side of (\ref{above}) is guaranteed to be smaller
than the right hand side of (\ref{below}) if
\begin{equation}\label{goodenough}
h\left(\frac{n^2}{m}-1\right)>\frac{1}{2}\deg(\Delta_E)+\frac{55}{24}.
\end{equation}
Applying Lang's conjecture (Proposition~\ref{Lang-conjecture},
with $c$ equal to the constant from there), $n$ is guaranteed to
be large enough if
$$
n^2 \ge 4\left(1+ \frac{67}{24c}\right).
$$
This shows that $\deg(B_n)>\deg(B_n')$ for all $n\ge N_0$, say.
Substituting in
the values for $c$ from Proposition~\ref{Lang-conjecture} gives the
stated values for $N_0$.
%\begin{itemize}
%\item  If $c=10^{-9.2}$, $N_0=133034$.
%\item If $c=\frac{1}{12}$, $N_0=12$.
%\item If $c=\frac{1}{16}$, $N_0=14$.
%\item If $c=\frac{1}{1728}$, $N_0=140$.
%\item If $c=\frac{1}{2304}$, $N_0=161$.
%\end{itemize}

For the second inequality, again use Lemma~\ref{compareheight}
and Proposition~\ref{height-canonical_height-function-field-case}.
These give a lower bound for $\deg(B_n')$ of the form
$$\frac34h'n^2-\frac{9}{32}\deg(\Delta_E)-\frac{15}{24}.
$$
Similarly they give rise to an upper bound for $\deg(B_1)$ of
the form
$$\deg(B_1)<h+\frac{3}{16}\deg(\Delta_E)+\frac56.
$$
Using the relation $h=mh'$,
what we require is guaranteed if
\begin{equation}\label{alsogoodenough}
h\left(\frac{3n^2}{4m}-1\right)>\frac{15}{32}\deg(\Delta_E)+\frac{35}{24}.
\end{equation}
Using Lang's conjecture means we can guarantee $\deg(B_n')>\deg(B_1)$ if
$$n^2>m\left(1+\frac{185}{96c}\right).
$$
Inserting the various possibilities for $c$ gives $N_1$.
%When $B_1=1$, use exactly the same methods. To guarantee
%that $\deg(B_n)>\deg(B_n')$ insist that
%$$n^2>4\left(\frac{85}{48c}\right).
%$$
%Also, $\deg(B_n')>0$ provided
%$$n^2>4\left(\frac{29}{96c}\right).
%$$
%Clearly the first inequality dominates the proceedings and
%inserting the various values for $c$ yields $N_2$ as claimed.
\end{proof}

\subsection{Explicit computations}

Given any particular example, it is possible to
compute $h, \deg(\Delta_E)$ and $m$. This data
can be fed into the bounds \eqref{goodenough} and
\eqref{alsogoodenough}.
Example \ref{joes} does not satisfy any of the
special criteria in Theorem~\ref{upc} so is best
handled this way.
The point $P=[t^2,t^2]$ on $E:y^2=x^3+t^2(1-t^2)x$ has
global canonical height equal to $\frac12$. It has bad reduction
at~$t$ and is the image of a $K$-rational point under a 2-isogeny.
This data is
inserted into
(\ref{goodenough}) and
(\ref{alsogoodenough})
to obtain a reasonable
bound for the indices $n$ beyond which $B_n$ is reducible.
Then the smaller indices can easily be checked.

On the other hand, the point
$2P=[t^4-t^2+\frac14,-t^6+\frac32t^4-\frac14t^2-\frac18]$ has good
reduction away from infinity so Theorem~\ref{upc} can be applied
directly to obtain a reasonable bound. Then the smaller indices
can be checked manually to verify that for all $n\ge 2$, the terms
$B_n/B_1=B_n$ have at least two distinct irreducible factors.

\medskip

\section{The rational case}\label{Ratcase}

Explicit results, when the base field is $\QQ$, are harder to
obtain. This is partly because Lang's conjecture is not known in
general. The tables in Section~\ref{comps} (see also Silverman
\cite[Exercise 6.10(d)]{Silverman-advance}) give examples of
rational points with very small ratio $\widehat h(P)/\log \Delta$,
forcing the constant in Lang's conjecture to be small. This small
constant plays a big role in the determination of an upper bound
on the largest index yielding a prime and it makes a reasonable
global uniform bound on the number of prime terms seem a very
distant prospect. On the other hand it is possible to use our
techniques to get reasonable bounds in particular cases, for
example, for congruent number curves:

\begin{example}\label{cncs}[{\cite{emr}}] For
$$E:y^2=x^3-N^2x$$ with $5\le N\in \mathbb N$ square-free,
assume $P$ lies in the image of a $2$-isogeny and has $x(P)<0$,
then each term $B_n/B_1$ fails to be a prime power for all $n\ge
8$. If~$P$ is integral then $B_n$ fails to be a prime power for
all $n\ge 3$. The conditions stated seem to be fulfilled for quite
a few examples with small~$N$. For example when~$N=5$ and
$P=[-4,6]$ (because $-4+5$ is a square). Similarly when $N=6$ and
$P=[-2,8]$. Frustratingly, each curve also gives rise to elliptic
divisibility sequences which are untouchable by our methods; even
to the extent that we cannot prove that only finitely many terms
are prime.
\end{example}

The heuristic argument in \cite{eds} predicts a bound for the
number of prime terms in an elliptic divisibility sequence,
provided the underlying curve is in minimal form. That argument
can be sharpened by making an assumption coming from Diophantine
analysis to predict a uniform bound; the details are given in
section~\ref{HA} below. However it cannot be adjusted to predict a
uniform bound upon the index which yields the largest prime term.
Although no such a bound is expected in general, a uniform bound
in the magnified case is not out of the question. The crucial
issue lies within the realm of Diophantine approximation and will
be discussed now.

\subsection{Heuristic argument}\label{HA}
The Prime Number Theorem suggests that the probability a large
integer $N$ is prime is roughly~$1/\log N$. Therefore, if
$(x_n)_{n\geq 1}$ is an increasing sequence of positive integers,
this suggests that (in the absence of an obvious reason to believe
otherwise), the expected number of prime terms $x_n$, with $n\leq
X$, is approximately
\begin{equation}\label{heur}
\sum_{n\leq X}\frac{1}{\log x_n}.\end{equation}
In particular, one should suspect that $x_n$ is prime infinitely
often if and only if the sum in (\ref{heur})
diverges as $X\rightarrow \infty$.  If $x_n=(a^n-b^n)/(a-b)$,
then $$\log x_n\sim nh(a/b)\text{ as }n\rightarrow\infty,$$
and so the sum diverges like a constant multiple of $\log X$.
All the available evidence supports the belief that a sequence
of this form with $a$ and $b$ not both $k^{\rm th}$ powers of integers,
for some $k>1$,
will produce prime terms at that rate.
If $x_n=B_n/B_1$, on the other hand,
$$\log x_n\sim n^2\h(P)\text{ as }n\rightarrow\infty,$$
where $\h(P)$
is the global canonical height of $P$.
As a consequence, the sum in (\ref{heur}) converges,
and it seems likely that $B_n/B_1$ is prime only finitely often.

A sharpened version of this heuristic argument can be given. Using
David's Theorem \cite{david} from elliptic transcendence theory,
it was argued in \cite{pe} that a lower bound for $\log B_n$ of
the following kind holds:
\begin{equation}\label{david}
\log B_n > hn^2 - C\log n(\log \log n)^4.
\end{equation}
The dependence of the constant $C$ is interesting. Suppose $E$
is minimal and $C=O(h(E))$. Then Lang's conjecture implies that
the sum in (\ref{heur}) is uniformly bounded above: in other
words, for elliptic divisibility sequences coming from curves
in minimal form, the sequence with $n^{\rm th}$ term $B_n/B_1$ should
be a prime a uniformly bounded number of times. Note that this
argument does not suggest a uniform bound upon the largest index
$n$ which yields a prime term $B_n/B_1$. Although David's
Theorem is very strong, currently the best form of the
constant $C$ has a polynomial but non-linear
dependence upon $h(E)$.
If a version of (\ref{david}) could be proved with a linear dependence
on $h(E)$ then, together with Lang's conjecture, we obtain a heuristic justification for the
uniform primality conjecture in general.

Failing this, a form of (\ref{david}) with a smaller main term but
a linear dependence in the error term would be adequate. This
technique has been used a number of times. This was precisely the
issue in the case when $K=\QQ(t)$. In this case, a lower bound for
$\deg(B_P)$ in terms of the height was given in
Lemma~\ref{compareheight}. This is weaker than the equivalent form
of (\ref{david}) in that the leading term is only $\frac34$ of
what it might be. On the other hand, the error term is linear in
$h(E)$ (or $\deg (\Delta_E)$). A similar device was exploited
in~\cite{emw} to obtain a uniform primitive divisor theorem.
In~\cite{emw}, the main term in (\ref{david}) was replaced by a
term only $\frac14$ of what it might be. Any attempt to prove
Conjecture~\ref{Qupc} is likely to encounter this phenomenon.

\subsection{Explicit computations}\label{comps}
Here are presented prime terms in sequences generated by the
rational points with smallest known height, in 18 cases drawn from
Elkies' tables. The computations were performed using
Pari-GP~\cite{parigp} (for the first 6 cases) and
MAGMA~\cite{magma} (for the other 12). In the table,
\begin{itemize}
\item $E$ is an elliptic curve given by a minimal equation as a
vector $[a_1,\dots ,a_6]$ in Tate's notation; \item $P$ denotes a
non-torsion point in $E(\Q)$;
%\item $\widehat{h}(P)$ denotes the canonical height of a point $P$ as given by Elkies' table for rational points of low canonical height;
%\item $h(E)$ denotes the height of the elliptic curve $E$;
\item $(B_{n})_{n\in\N}$ denotes the elliptic divisibility sequence associated to $P$;
\item $h_0$ denotes 1000 times the global canonical height of $P$
\item $c_0$ denotes 10000 times the ratio ${\widehat h(P)}/{h(E)}$
\item $N_{2}$ denotes the maximal index for which the primality of $B_{n}$ has been tested;
\item $N_{0}$ denotes the number of indices $n\le N_{2}$ such that $B_{n}=1$;
\item $N_{1}$ denotes the number of indices $n\le N_{2}$ such that $B_{n}$ is a prime number;
\item $N_{3}$ is the greatest index $n\le N_{2}$ such that $B_{n}$ is a prime number.
\end{itemize}
The largest prime obtained comes from the first pair: the point
$$
P=[7107, 594946]
$$
on the curve
$$
E:y^2+xy+y=x^3+x^2-125615x+ 61201397
$$
yields a prime term $B_{3719}$ with $26774$ decimal digits.

\newpage
\begin{center}
\begin{tabular}{|l|l|l|l|l|l|l|l|}
\hline &&&&&&&\\[-10pt]
\textrm{Minimal model} & \textrm{Point $P$} & $h_0$ & $c_0$ & $N_{0}$ & $N_{1}$& $N_{2}$ & $N_{3}$\\[3pt]
\hline
\hline
$[1,1,1,-125615,61201397]$&$[7107, 594946]$&4.45&1.06&15&32&4500&3719\\
\hline
$[1,0,0,-141875,18393057]$&[-386, -3767]&4.51&1.18&15&32&4900&1811\\
\hline
$[1,-1,1,-3057,133281]$&[591, -14596]&4.86&1.65&13&29&4700&541\\
\hline
$[1,1,1,-2990,71147]$&[27, -119]&4.98&1.84&14&32&4500&829\\
\hline
$[0,0,0,-412,3316]$&[-18, -70]&5.63&2.90&11&28&4300&317\\
\hline
$[1,0,0,-4923717,4228856001]$&[1656, -25671]&5.71&1.24&14&29&4300&419\\
\hline
$[1,0,0,-13465,839225]$ & [80, 485] & $5.77$ & %$0.00172741392695346066819757866527$
$17.2$ & $15$ & $25$ & $3000$ & $571$\\
\hline
$[1,0,0,-21736,875072]$ & $[-154, -682]$ & $5.92$ & %$0.00171035392190535147448998687128$
$17.1$ & $12$ & $25$ & $3000$ & $953$\\
\hline
$[1,-1,1-1517,26709]$ & $[167, -2184]$ & $6.03$ & %$0.00257756543662832120536714444116$
$25.7$ & $13$ & $22$ & $3000$ & $1283$\\
\hline
$[1,0,0,-8755,350177]$ & $[14, 473]$ & $6.12$ & %$0.00189255602700427174717466511027$
$18.9$ & $15$ & $32$ & $3000$ & $401$\\
\hline
$[1,-1,1,-180,1047]$ & $[-1, 35]$ & $6.42$ & %$0.00371482523840990189290712043287$
$37.1$ & $12$ & $27$ & $3000$ & $383$\\
\hline
$[1,0,0,-59852395,185731807025]$ & $[12680, 1204265]$ & $6.56$ & %$0.00120650585551280204145791305122$
$12.0$ & $13$ & $21$ & $3000$ & $359$\\
\hline
$[1,0,0,-10280,409152]$ & $[304, -5192]$ & $6.62$ & %$0.00202049581179633409705287545617$
$20.2$ & $13$ & $32$ & $3000$ & $311$\\
\hline
$[0,1,1,-310,3364]$ & $[-19, 52]$ & $6.70$ & %$0.00367386555335565718075609489172$
$36.7$ & $12$ & $20$ & $3000$ & $103$\\
\hline
$[1,0,0,-42145813,105399339617]$ & $[31442, 5449079]$ & $6.78$ & %$0.00148976676762483378684105822732$
$14.8$ & $14$ & $21$ & $3000$ & $349$\\
\hline
$[1,0,0,-25757,320049]$ & $[-116, -1265]$ & $6.82$ & %$0.00194541021993545567715539718780$
$19.4$ & $14$ & $22$ & $3000$ & $83$\\
\hline
$[1,0,0,-350636,80632464]$ & $[352, 748]$ & $6.91$ & %$0.00166300442275556428829485794934$
$16.6$ & $13$ & $22$ & $3000$ & $137$\\
\hline
$[1,0,0,-23611588,39078347792]$ & $[-3718, -272866]$ & $7.41$ & %$0.00166163961617486666041347071369$
$16.6$ & $13$ & $23$ & $3000$ & $109$\\
\hline
\end{tabular}
\end{center}

%\bibliographystyle{amsplain}
%\bibliography{references}

\end{document}